\documentclass[11pt,a4paper]{article}
\usepackage[T1]{fontenc}
\usepackage{lmodern,amsmath,amssymb,amsthm,mathtools,booktabs}
\usepackage[margin=27mm]{geometry}
\usepackage{microtype}
\usepackage[hidelinks]{hyperref}
\usepackage{xurl}
\newtheorem{theorem}{Theorem}[section]
\newtheorem{lemma}[theorem]{Lemma}
\newtheorem{proposition}[theorem]{Proposition}

\theoremstyle{definition}

\theoremstyle{remark}

\newcommand{\F}{{\mathbb F_3}}
\newcommand{\dc}{\mathsf d}
\newcommand{\Span}{\operatorname{span}}
\newcommand{\supp}{\operatorname{supp}}

\newcommand{\file}[1]{{\small\nolinkurl{#1}}}
\hypersetup{pdftitle={A twelve-term exclusion for the small Davenport constant of E2 times C3 to the r},pdfauthor={Andreas Volkmann}}
\title{A twelve-term exclusion for the small Davenport constant of
\texorpdfstring{\\$E_2\times C_3^r$}{ E2 times C3 to the r}}
\author{Andreas Volkmann}
\date{19 September 2026}
\begin{document}
\maketitle
\begin{abstract}
Let $E_2$ be the extraspecial group of order $3^5$ and exponent three.
For every $r\ge1$, we prove that a product-one-free sequence of length
$2r+11$ over $E_2\times C_3^r$ cannot contain exactly $2r-1$ central
terms. Thus the critical layer with twelve noncentral terms is excluded.
The proof combines a relative moment criterion for abelian normal
subgroups with a finite theorem in a symplectic four-space over
$\mathbb F_3$. Under explicit subspace occupancy bounds, the family
of balanced triangles that can actually be completed to a nonfull
zero-sum block admits edge weights summing to one on every triangle.
A dual cycle argument reduces this assertion to a potential condition
on branching edges. All remaining configurations contain one of eleven
minimal frames; two separately implemented exhaustive checks verify all
their admissible extensions. The unrestricted check has 9\,544\,046
leaves. Complete source code and execution records are supplied.
The exact value of $\dc(E_2\times C_3^r)$ for arbitrary $r$ is not
determined by this result.
\end{abstract}

\noindent\textbf{Keywords:} small Davenport constant; product-one-free
sequences; extraspecial groups; symplectic geometry; computer-assisted proof.

\section{Introduction and main result}\label{sec:intro}
A sequence over a finite group is an unordered list with repetitions;
all subsequences below are selections of indexed occurrences.
A nonempty subsequence is \emph{product-one} if its terms can be ordered
with product the identity. The small Davenport constant $\dc(G)$ is
the maximum length of a product-one-free sequence over $G$.

For elementary abelian groups, Olson's theorem gives
$\dc(C_p^n)=n(p-1)$~\cite{Olson}.
The value $\dc(H_{27})=6$, where $H_{27}=\operatorname{UT}_3(\F)$,
was obtained computationally by Cziszter, Domokos and
Sz\"oll\H osi~\cite{CDS} and subsequently proved theoretically by
Godara and Sarkar~\cite{GodaraSarkar}.
The formula $\dc(H_{p^3})=3p-3$ for odd primes is proved
in~\cite{VolkmannUniform}. At the prime three, the author's
subsequent result~\cite[Theorem~1.1]{VolkmannH27} gives
\begin{equation}\label{eq:ranktwo}
\dc(H_{27}\times C_3^s)=2s+6\qquad(s\ge0).
\end{equation}
Its proof assigns weights to the eight edge-disjoint zero-sum triangles
in the nonorthogonality graph of $\F^2$.

The same approach encounters a specific obstruction in symplectic
dimension four: a balanced nonorthogonal triangle can have nonzero
sum, which may be cancelled by additional terms orthogonal to the
whole triangle. This obstruction and the target formula
\begin{equation}\label{eq:target}
\dc(E_2\times C_3^r)=2r+10
\end{equation}
were recorded in~\cite[Section~5]{VolkmannH27}.
Here $E_2$ denotes the extraspecial group of order $3^5$ and exponent
three, also denoted $H_2(\F)$.
The lower bound in~\eqref{eq:target} is obtained by taking two copies
of each of four symplectic basis lifts, of a generator of $E_2'$, and
of each of the $r$ central direct-factor generators. Projection modulo
$E_2'$ first forces all the nonderived multiplicities to be zero in a
product-one subsequence; the derived multiplicity is then also zero.

The present result excludes one layer of the critical length $2r+11$.
\begin{theorem}\label{thm:main}
Let $G=E_2\times C_3^r$ with $r\ge1$. There is no product-one-free
sequence $S$ over $G$ satisfying
\[
|S|=2r+11,\qquad |S\cap Z(G)|=2r-1.
\]
\end{theorem}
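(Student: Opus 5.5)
We argue by contradiction. Suppose $S$ is product-one-free over $G$ with $|S|=2r+11$, and split $S=T\cdot U$. Here $T=S\cap Z(G)$ has $|T|=2r-1$, and $U$ consists of the $12$ noncentral terms.

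\textbf{Reduction to an abelian problem.} Write $Z(G)=E_2'\times C_3^r\cong C_3^{r+1}$ and $V=G/Z(G)\cong\F^4$, where $V$ carries the symplectic form $\omega$ induced by the commutator into $E_2'\cong\F$. The first step is a relative moment criterion for the abelian normal subgroup $Z(G)$. Take a subsequence $W\mid U$ whose image in $V$ is zero-sum. The products of the terms of $W$, over all orderings, form a coset of a subgroup of $E_2'$. This subgroup is generated by the values $\omega(\bar u,\bar v)$ over pairs of terms in $W$. It is therefore all of $E_2'$ as soon as $\bar W$ contains a nonorthogonal pair; otherwise the product is a single element. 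Hence each such $W$ contributes at least one element of $Z(G)$, and when the support of $\bar W$ is not isotropic we may choose that element freely up to $E_2'$.

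If $S$ is product-one-free, then $T$ together with these realizable elements must avoid every zero-sum in $C_3^{r+1}$. Olson's theorem gives $\dc(C_3^{r+1})=2r+2$. Since $|T|=2r-1$, the collection $U$ must not admit three disjoint zero-sum blocks whose central contributions, adjoined to $T$, produce a zero-sum. The gain from the freedom in $E_2'$ is what makes this exact. Because $T$ has only $2r-1$ terms, Olson's bound leaves slack of exactly $3$. The condition then becomes that $\bar U$, a $12$-term sequence in $\F^4$, cannot be partitioned into suitable ``nonfull'' zero-sum blocks. A block is nonfull if it does not use all of $U$, and it must have nonisotropic support so that its $E_2'$-coordinate can be adjusted. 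We would formalize this as a moment inequality: assign each zero-sum block a weight, and show the total weight forced by $12$ terms exceeds what a product-one-free $S$ allows.

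\textbf{Symplectic finite theorem.} Next we bound occupancy. A product-one-free $S$ forces every subspace $L\le V$ to contain few terms of $\bar U$: at most $2\dim L$ if $L$ is isotropic, and a similar bound coming from $\dc(H_{27}\times C_3^s)=2s+6$ in \eqref{eq:ranktwo} if $L$ is a nondegenerate plane. Under these bounds we consider balanced nonorthogonal triangles $\{a,b,c\}$ in $\bar U$, meaning triples with pairwise nonzero $\omega$. We keep only those that can be completed, using terms orthogonal to the whole triangle, to a nonfull zero-sum block. The key claim is that this family carries nonnegative edge weights summing to exactly one on each triangle, with total weight at each vertex bounded by $1/2$ times its multiplicity. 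Summing then caps the number of disjoint triangles below what is needed, and combining this with the moment criterion produces the contradiction.

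\textbf{Main obstacle.} We expect the weight claim to be the hardest step. By LP duality, a weighting fails to exist only if there is a cycle-type obstruction. We would reduce this to a potential condition on edges lying in two or more triangles (branching edges). Configurations violating it would be shown to contain one of finitely many minimal frames, which are small sets of points of $\F^4$ up to $\mathrm{Sp}_4(\F)$. For each frame, an exhaustive search over admissible extensions to $12$ points obeying the occupancy bounds would verify the claim, and we would use a second independent implementation as a cross-check.
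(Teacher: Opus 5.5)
There is a genuine gap, and it starts in your reduction step. The set of products of an active-zero block is \emph{not} always a coset of a subgroup of $E_2'$.

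By Lemma~\ref{lem:relative-order-types}, consider a block whose nonorthogonality graph is a balanced triangle plus isolated vertices. Here balanced means $\omega(u,v)=\omega(v,w)=\omega(w,u)\ne0$, not merely pairwise nonorthogonal as you define it; unbalanced triangles are full. Such a block has correction set $\{1,-1\}$, which is two of the three values. In $\F^4$ the triangle may have nonzero sum $t$. It is then completed to an active-zero block by occurrences on the line $\F t=\langle u,v,w\rangle^\perp$.

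These blocks give neither a fixed nor a freely adjustable central element, and they are exactly the obstruction the paper has to overcome. Your Olson-type packing count does not see them. It also miscounts: you would need four adjoined central elements, not three, to exceed $\dc(C_3^{r+1})=2r+2$, and twelve terms need not contain four disjoint active-zero blocks. Separately, ``full'' in the paper means $\Omega(B)=\F$, not ``using all of $U$''.

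The slack $2b-|S\cap Z(G)|=3$, with $b=r+1$, enters instead as a degree bound. The coefficient function $f(w)=[X^w]\prod_c(1-X^c)$ of the central terms has degree at most $3$ (Lemma~\ref{lem:relative-coefficient-degree}). Hence, with $g(w)=f(-w)$, the $t^2$-coefficient $h(w)$ of $g(w+tz)$ is affine. Full blocks force $h(L_B)=0$, and balanced-triangle blocks force $g(L_B)=-h(L_B)$. The Boolean-cube identity then gives Theorem~\ref{thm:relative} with $d=3$, $D=1$: there is a functional vanishing on $M_1$ and equal to $1$ on $W_1$. So the contradiction must be the linear membership $W_1\in M_1$ over $\F$, not an inequality on weights.

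Consequently your weights are of the wrong kind. What is needed are $\F$-valued weights $\rho$ with $\rho_{uv}+\rho_{vw}+\rho_{wu}=1$ on every completable balanced triangle (Theorem~\ref{thm:potential12}). Then $E(X)=\sum\rho_{q_iq_j}X_iX_j$ equals $1$ on triangle blocks and $0$ on commuting ones. The alternating Boolean sums of $Q(X)E(X)L$ vanish because its degree is at most $8+2+1<12$, and this gives \eqref{eq:finalmoment}. Nonnegative real weights with vertex bounds, used to cap disjoint triangle packings, feed into no valid inequality here.

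The cycle/potential reduction is also an $\F$-linear argument, not real LP duality. In Lemma~\ref{lem:branchpotential}, a kernel vector $\lambda\in\ker_{\F}D$ is lifted to an integral two-chain whose boundary is divisible by $3$. An entry of $D\lambda$ at an edge lying in at most two triangles is an integer in $[-2,2]$, hence zero. That is why branching edges are those in \emph{at least three} triangles, not two.

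Finally, your occupancy input is both insufficient and unproved. The elementary subgroup arguments give only line $\le3$ and isotropic plane $\le5$ (Proposition~\ref{prop:lc-initial}). The finite theorem, however, is proved under line $\le2$, isotropic plane $\le3$ (below your $2\dim L=4$), nondegenerate plane $\le6$ and hyperplane $\le8$. Reaching these requires the quadratic-fibre obstruction and the relative criterion over the noncentral abelian subgroups $N_L$ and $N_U$, together with the finite nine-, seven- and eight-term lemmas. Your sketch supplies none of these.
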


The main new finite statement concerns the twelve nonzero projections
of the noncentral terms in $G/Z(G)\cong\F^4$.
Under the occupancy bounds established below, the symplectic
edge cochain is a gradient on every edge belonging to at least three
\emph{actually completable} balanced triangles. A dual integral-cycle
argument then supplies all the required triangle weights.
This is stronger than the affine moment membership used in the final
group-theoretic contradiction.

The proof has four finite components: a nine-term line reduction,
seven- and eight-term reductions at an isotropic plane, and the
eleven-frame twelve-term check. Each component is covered by a complete
finite case analysis and two separately implemented exhaustive
programs. The algebraic reductions and their coverage arguments are
given in the text; the programs and full execution records are ancillary
files. We use~\eqref{eq:ranktwo} as a cited theorem. No unproved
nine-outside-vector quadratic moment assertion is used.

Theorem~\ref{thm:main} does not prove~\eqref{eq:target}. For example,
when $r=1$, the critical case of thirteen noncentral terms and no central
terms is outside its scope.

\section{Coordinates and ordering corrections}\label{sec:coordinates}
Let $P=\F^4$ with its standard nondegenerate alternating form
\[
\omega((a_1,a_2,b_1,b_2),(a'_1,a'_2,b'_1,b'_2))
=a_1b'_1-b_1a'_1+a_2b'_2-b_2a'_2.
\]
Write $Z=\F z\oplus\F^r$ and identify $G$ with $P\oplus Z$
under the multiplication
\begin{equation}\label{eq:law}
(q,w)*(q',w')=(q+q',w+w'+\tfrac12\omega(q,q')z).
\end{equation}
Here $1/2=2$ in $\F$, and $\omega$ is extended to the whole space by
making $Z$ its radical. This is a class-two, exponent-three group
with center $Z$ and derived group $\F z$; its nondegenerate factor
is $E_2$. We call $q$ the \emph{active projection} of $(q,w)$.

For terms $x_i$ in such a Lie coordinate model and a set of indices
$B$, let $\Omega(B)$ be the set of scalars
\begin{equation}\label{eq:omega}
\frac12\sum_{j<k}\omega(x_{i_j},x_{i_k})
\end{equation}
as $(i_1,\ldots,i_{|B|})$ ranges over all orderings of $B$.
The possible products are $\sum_{i\in B}x_i+\Omega(B)z$.
A block is \emph{full} if $\Omega(B)=\F$.
Its nonorthogonality graph joins occurrences with nonzero pairing.
An oriented triple $(u,v,w)$ is \emph{balanced} if
\[
\omega(u,v)=\omega(v,w)=\omega(w,u)\ne0.
\]
This definition is invariant under reordering the three vertices.
Occupancy always counts occurrences, including both nonzero values
on a one-dimensional subspace.

\section{A relative moment criterion}
\label{sec:relative-moments}

The subgroup used in a moment reduction need not be central.  This
section establishes a criterion for an arbitrary abelian subgroup
containing the derived subgroup.  All sequences and subsequences are
indexed: equal group elements occurring at different positions remain
different occurrences.

Throughout this section, let $G$ be a finite group of exponent three and
nilpotency class at most two, with
$G'=\langle z\rangle$ of order three.  We use its class-two Lie
coordinates over $\mathbb F_3$, in which multiplication is
\begin{equation}
 x*y=x+y+\frac12\omega(x,y)z.
 \label{eq:relative-lie-product}
\end{equation}
Here the underlying vector space is denoted by $\mathfrak g$, the
scalar $1/2$ means $2\in\mathbb F_3$, and $\omega$ is an alternating
bilinear form satisfying $\omega(z,x)=0$ for every $x$.
For completeness, with the group commutator convention
$[x,y]=x^{-1}y^{-1}xy$, the additive operation is
$x+y=xy[x,y]^{-1/2}$ and scalar multiplication is given by group
powers.  The class-two commutator identities verify that this is a
vector space over $\mathbb F_3$ and that
\eqref{eq:relative-lie-product} recovers the group multiplication,
where $[x,y]=z^{\omega(x,y)}$.

Let $A\le G$ be abelian and contain $G'$.  It is a vector subspace in
these coordinates, and it is normal because $[G,A]\subseteq G'\le A$.
Write
\[
 \dim_{\mathbb F_3} A=b,
 \qquad
 \dim_{\mathbb F_3}(G/A)=a.
\]
The restriction of $\omega$ to $A\times A$ is zero, but its
restriction to $A\times\mathfrak g$ is not assumed to be zero.

\subsection{Coefficient functions and ordering corrections}

We first record the coefficient-degree fact used below.  The degree of
a function on $\mathbb F_3^b$ means the total degree of its unique
polynomial representative having degree at most two in every
coordinate.

\begin{lemma}[Degree of an augmentation coefficient function]
\label{lem:relative-coefficient-degree}
Let $V=\mathbb F_3^b$, let $\mathbb F_3[V]$ have basis
$\{X^v:v\in V\}$ with multiplication $X^uX^v=X^{u+v}$, and let $I$
be its augmentation ideal.  If $F\in I^m$ and
$f(v)=[X^v]F$, then $f$ has degree at most $2b-m$.
If $m>2b$, both $F$ and $f$ are zero.

In particular, for an indexed sequence $C$ in $V$, the function
\[
 f_C(v)=[X^v]\prod_{c\in C}(1-X^c)
\]
has degree at most $2b-|C|$.  If $C$ is zero-sum-free, then
$f_C(0)=1$.
\end{lemma}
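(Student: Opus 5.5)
The plan is to identify the functions on $V$ with the group algebra itself, via the Fourier-type isomorphism given by restriction to the "monomial" basis, and to use the standard filtration $\mathbb F_3[V]\supseteq I\supseteq I^2\supseteq\cdots$. Choose a basis $e_1,\dots,e_b$ of $V$ and set $Y_i=X^{e_i}-1$. Since $X^{3e_i}=1$ in characteristic three, we get $Y_i^3=X^{3e_i}-1=0$. Hence $\mathbb F_3[V]\cong\mathbb F_3[Y_1,\dots,Y_b]/(Y_1^3,\dots,Y_b^3)$, with basis the monomials $Y^\alpha$, $0\le\alpha_i\le2$. The ideal $I$ is generated by the $Y_i$, so $I^m$ is spanned by the monomials $Y^\alpha$ with $|\alpha|\ge m$. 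In particular $I^m=0$ for $m>2b$, because the largest surviving monomial has total degree $2b$. This settles the vanishing claim for $F$, and hence for $f$.

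For the degree bound it suffices, by linearity, to treat one monomial $F=Y^\alpha$ with $|\alpha|\ge m$. Expanding gives
\[
Y^\alpha=\prod_i(X^{e_i}-1)^{\alpha_i}=\sum_{\beta\le\alpha}\prod_i\binom{\alpha_i}{\beta_i}(-1)^{\alpha_i-\beta_i}X^{\sum\beta_ie_i},
\]
so $f$ factors as $f(v)=\prod_i g_{\alpha_i}(v_i)$. Here $g_k(t)$ is the coefficient of $X^t$ in $(X-1)^k$ inside $\mathbb F_3[C_3]$. We then compute these functions directly on $\mathbb F_3$:
\begin{itemize}
\item $g_0=\delta_0=1-t^2$, which has degree $2$;
\item $g_1$ takes values $(-1,1,0)$ at $t=0,1,2$, and is given by $t^2+t-1$, of degree $2$;
\item $(X-1)^2=X^2+X+1$ gives $g_2\equiv1$, of degree $0$.
\end{itemize}
Thus $\deg g_k\le 2-k$ for $k=0$ and $k=2$. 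The case $k=1$ needs more care, because $g_1$ genuinely has degree $2$.

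I expect this to be the main obstacle: a single factor does not satisfy the bound, so we cannot simply multiply. To handle it, write $Y^\alpha$ as $Y^{\alpha'}$ times further factors and use the multiplicative structure. The honest route is to note that $I^m$ is spanned by the products $\prod_{j=1}^{m'}(1-X^{c_j})$ with $m'\ge m$. It therefore suffices to show that multiplication by $(1-X^c)$ lowers the degree bound by one, i.e. that $f\mapsto f(v)-f(v-c)$ sends degree $\le d$ to degree $\le d-1$. This holds because a finite difference of a reduced polynomial strictly lowers its total degree. The one point to check is that the difference of a reduced polynomial remains reduced of lower degree; this holds since $t^2-(t-c)^2=2ct-c^2$ has degree $1$, and similarly for mixed monomials. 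Starting from the constant function $1$ for $X^0$, which has degree at most $2b$ as the trivial bound, and applying $m$ differences gives $\deg f\le 2b-m$. With this difference argument the monomial calculation above becomes unnecessary, serving only as a sanity check.

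For the particular statement, we apply the above to $F=\prod_{c\in C}(1-X^c)\in I^{|C|}$, which gives $\deg f_C\le 2b-|C|$. Finally, expand the product as $\sum_{D\subseteq C}(-1)^{|D|}X^{\sigma(D)}$, where $\sigma(D)$ is the sum of $D$. If $C$ is zero-sum-free, only $D=\emptyset$ contributes to $X^0$, so $f_C(0)=1$.
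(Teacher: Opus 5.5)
Your final argument is correct. The key facts all hold: $[X^v](1-X^c)F=f(v)-f(v-c)$; this difference lowers the reduced degree by at least one, since for a reduced monomial $v^\alpha$ the difference $v^\alpha-(v-c)^\alpha$ loses its top term and stays of degree at most two in each variable; and $I^m$ is spanned by products of exactly $m$ factors $1-X^{c}$. So $m$ differences applied to the coefficient function of $X^0$ give degree at most $2b-m$. That starting function is $\delta_0=\prod_i(1-v_i^2)$, not the constant function $1$ as you wrote, but you only use the trivial bound $2b$, which holds for any function.

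The paper, however, does precisely the monomial computation you set aside. With $Y_j=1-X^{e_j}$, the coefficient functions of $Y_j^0,Y_j^1,Y_j^2$ are $1-t^2$, $1+t$ and $1$. Hence each spanning monomial $Y^\alpha$ of $I^m$ has a product coefficient function of degree $2b-|\alpha|\le 2b-m$.

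The obstacle you identified does not exist; it comes from an arithmetic slip. The values $(-1,1,0)$ at $t=0,1,2$ interpolate to $g_1(t)=-1-t$, which has degree one. Your $t^2+t-1$ equals $-1$, not $0$, at $t=2$. So your ``sanity check'' is false as written, and your monomial calculation alone already completes the proof, exactly as in the paper.

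As for what each route buys: your difference-operator argument is coordinate-free. It does not even need the truncated polynomial presentation for the vanishing statement, since negative degree forces $f=0$ and hence $F=0$. It also carries over verbatim to $\mathbb F_p^b$ with bound $(p-1)b-m$. The paper's computation is more explicit: it exhibits a basis of $I^m$ together with the exact coefficient function of each basis element.
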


\begin{proof}
Choose a basis $e_1,\ldots,e_b$ of $V$ and put $Y_j=1-X^{e_j}$.
There is an isomorphism
\[
 \mathbb F_3[V]\cong
 \mathbb F_3[Y_1,\ldots,Y_b]/(Y_1^3,\ldots,Y_b^3),
 \qquad I=(Y_1,\ldots,Y_b).
\]
In one coordinate, the coefficient of $X^{te_j}$ in
$Y_j^k$, for $t\in\mathbb F_3$ and $k=0,1,2$, is respectively
\[
 p_0(t)=1-t^2,
 \qquad p_1(t)=1+t,
 \qquad p_2(t)=1.
\]
Thus the coefficient function of $Y_1^{k_1}\cdots Y_b^{k_b}$ has
degree at most $\sum_j(2-k_j)=2b-\sum_jk_j$.
The ideal $I^m$ is spanned by the monomials with
$\sum_jk_j\ge m$ and $0\le k_j\le2$, proving the asserted bound and
the vanishing for $m>2b$.  Each factor $1-X^c$ belongs to $I$.
Finally, if $C$ is zero-sum-free, the empty subset is the only subset
contributing to the coefficient of $X^0$, so that this coefficient is
one.
\end{proof}

For a nonempty indexed block $B$ of vectors $x_i\in\mathfrak g$,
define its set of ordering corrections by
\begin{equation}
 \Omega(B)=
 \left\{
 \frac12\sum_{p<q}\omega(x_{\sigma(p)},x_{\sigma(q)}):
 \sigma\text{ is an ordering of }B
 \right\}\subseteq\mathbb F_3.
 \label{eq:relative-omega}
\end{equation}
Its possible products are exactly
$\sum_{i\in B}x_i+t z$, for $t\in\Omega(B)$.
The nonorthogonality graph of $B$ has the occurrences in $B$ as its
vertices and an edge $\{i,j\}$ whenever $\omega(x_i,x_j)\ne0$.
A triangle on occurrences $i,j,k$ is called \emph{balanced} if
\begin{equation}
 \omega(x_i,x_j)=\omega(x_j,x_k)=\omega(x_k,x_i)\ne0.
 \label{eq:relative-balanced}
\end{equation}
This property does not depend on the chosen cyclic orientation.

\begin{lemma}[The nonfull correction sets]
\label{lem:relative-order-types}
For a nonempty block $B$, precisely the following nonfull cases occur:
\begin{enumerate}
 \item the graph has no edges, in which case $\Omega(B)=\{0\}$;
 \item the graph consists of one edge and isolated vertices, in which
 case $\Omega(B)=\{-1,1\}$;
 \item the graph consists of one balanced triangle and isolated
 vertices, in which case $\Omega(B)=\{-1,1\}$.
\end{enumerate}
Every other graph, with its specified nonzero pairings, has
$\Omega(B)=\mathbb F_3$.
\end{lemma}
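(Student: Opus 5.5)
The plan is to view $\Omega(B)$ as the image of the map $\sigma\mapsto c(\sigma)=\frac12\sum_{p<q}\omega(x_{\sigma(p)},x_{\sigma(q)})$. I would use three elementary properties of this map to reduce the statement to a finite check on blocks with at most four occurrences.

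\emph{Symmetry.} Reversing an ordering replaces each $\omega(x_i,x_j)$ by $\omega(x_j,x_i)=-\omega(x_i,x_j)$. Hence $\Omega(B)=-\Omega(B)$, and a nonfull $\Omega(B)$ is either $\{0\}$ or $\{-1,1\}$.

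\emph{Heredity.} Let $S\subseteq B$. Consider orderings that list $S$ first in some order and then $B\setminus S$ in a fixed order. For these, $c(\sigma)$ equals the correction of the order of $S$ plus a constant; the constant consists of the internal term of $B\setminus S$ and the cross term $\frac12\sum_{s\in S,\,r\notin S}\omega(x_s,x_r)$, neither of which depends on the order inside $S$. So $\Omega(B)$ contains a translate of $\Omega(S)$. In particular, if some sub-block $S$ is full, then $B$ is full.

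\emph{Transpositions.} Swapping two adjacent occurrences $i,j$ changes $c$ by $-\omega(x_i,x_j)$. So if the graph has no edges, $c$ is constant, and the empty order type gives $c=0$.

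Next I would verify the three listed nonfull cases directly. With one edge $\{i,j\}$, $\omega(x_i,x_j)=\alpha$, only the relative order of $i,j$ matters, giving $\pm\frac12\alpha=\mp\alpha$; this is $\{-1,1\}$. For a balanced triangle with common value $\gamma$, the cyclic order $(i,j,k)$ gives $\frac12(\omega_{ij}+\omega_{ik}+\omega_{jk})=\frac12(\gamma-\gamma+\gamma)=\frac12\gamma$. The three cyclic rotations give the same value, and the reversed orders give $-\frac12\gamma$. Adding isolated vertices does not change the set, since their pairings vanish. Thus the set is $\{-1,1\}$ in both cases.

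For fullness in all remaining cases, I would use heredity to find a small full sub-block. Suppose the graph is none of the three types. If it has at least two edges, pick two of them. If the two edges share a vertex, they span a three-vertex sub-block that is either a path or a triangle; the triangle is unbalanced unless the graph has a further edge. For a path $a\!-\!b\!-\!c$ with $\omega_{ab}=\alpha$, $\omega_{bc}=\beta$, $\omega_{ac}=0$, the orders $(a,b,c)$, $(b,a,c)$, $(a,c,b)$, $(c,b,a)$ realise $\frac12(\pm\alpha\pm\beta)$ with all sign choices. Since $\alpha,\beta\in\{\pm1\}$, these values are $0$ and $\pm1$, so the block is full. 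For an unbalanced triangle, a short enumeration of the six orders gives all of $\mathbb F_3$ similarly.

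If the two chosen edges are disjoint, the four endpoints induce a sub-block. If it has no further edges, the values $\frac12(\pm\alpha\pm\beta)$ appear as before, so it is full. If it has any further edge, that edge together with one of the two chosen edges shares a vertex, which returns to the three-vertex case. The remaining situation is a balanced triangle plus an extra edge, with the extra edge either touching the triangle or disjoint from it. In this case I would choose the two edges differently so that they form a path or an unbalanced triangle. Failing that, I would do a direct four-vertex enumeration.

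The main obstacle I expect is this last bookkeeping: making sure that every graph outside the three types really contains a three- or four-vertex induced configuration already checked to be full. The arithmetic itself is routine. The subtle point is that sub-blocks must be induced, because their pairings are inherited. So I would organise the case split by the induced graph on the chosen vertices rather than by the chosen edges alone.
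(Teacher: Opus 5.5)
Everything you actually compute is correct: the heredity translate, the sets $\{0\}$, $\{-1,1\}$, $\{-1,1\}$ for the three listed types, and fullness of an induced path, an unbalanced triangle, and two disjoint edges with no further pairings among their endpoints. The gap is the exhaustiveness step. You leave it as a plan (``choose the two edges differently \dots\ failing that, a direct four-vertex enumeration''). As written, you have not shown that every graph outside the three types contains one of your full configurations, and that is the substantive half of the lemma.

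The missing idea is that you do not need \emph{induced} two-edge sub-blocks. Your own transposition remark suffices. Take an ordering $(\dots,a,b,c,d,\dots)$ with $u=\omega(x_a,x_b)\neq0$ and $v=\omega(x_c,x_d)\neq0$. The two adjacent swaps change the correction by $-u$ and $-v$ independently, whatever the other pairings among $a,b,c,d$ are, because an adjacent swap alters only the term of the swapped pair. The four values $c_0+\{0,-u\}+\{0,-v\}$ already cover $\mathbb F_3$. This is exactly the paper's route. Any two disjoint edges force fullness, so a nonfull graph has matching number at most one. After deleting isolated vertices it is then a star or a triangle. A star with at least two edges contains an induced path, since its leaves are pairwise nonadjacent. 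A triangle is full unless it is balanced.

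If you prefer to stay with induced sub-blocks, the missing case also closes quickly. Let $\{i,j,k\}$ be balanced with $\omega(x_i,x_j)=\omega(x_j,x_k)=\omega(x_k,x_i)=\gamma$, and let $\{k,l\}$ be an extra edge. Then either $\{i,k,l\}$ or $\{j,k,l\}$ induces a path, or both are triangles. They cannot both be balanced, since that would give $\omega(x_l,x_k)=\gamma=\omega(x_k,x_l)$. If instead the extra edge has no vertex adjacent to the triangle, it and $\{i,j\}$ induce two disjoint edges with no further pairings. Equivalently: a graph with no induced three-vertex path is a disjoint union of cliques, and $K_4$ always contains an unbalanced triangle.
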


\begin{proof}
If a subblock has all three corrections, so does the whole block:
place the subblock consecutively in a fixed ordering of the remaining
occurrences.  The contribution from pairs crossing its boundary is
independent of the internal ordering of the subblock.

Two disjoint edges already give all three corrections.  Indeed, place
the endpoints of each edge consecutively, and independently swap the
two adjacent pairs.  The four resulting corrections have the form
$c+\{0,u\}+\{0,v\}$ with $u,v\in\mathbb F_3^*$, and this set is
$\mathbb F_3$.  Hence a nonfull graph has matching number at most one.
A graph with this property, after deleting isolated vertices, is a
star or a triangle: if two edges share a vertex and an edge avoids that
vertex, the three edges form a triangle, and every further edge must
belong to that triangle.

A star with at least two edges is full.  For its center and two
leaves, the two nonzero pairings can independently receive either
sign according to the positions of the leaves before or after the
center.  Their signed sums contain all of $\mathbb F_3$.
For a triangle, put
$u=\omega(x_i,x_j)$, $v=\omega(x_j,x_k)$, and
$w=\omega(x_k,x_i)$.  Its corrections are
\[
 \left\{
 \pm\tfrac12(u+v-w),\quad
 \pm\tfrac12(u-v-w),\quad
 \pm\tfrac12(-u+v-w)
 \right\}.
\]
Since $u,v,w\in\{1,-1\}$, this set is $\{1,-1\}$ when
$u=v=w$, and is $\mathbb F_3$ otherwise.  A single edge has corrections
$\{\pm\frac12\omega(x_i,x_j)\}=\{1,-1\}$, whereas an edgeless
block has correction zero.  Isolated vertices do not change any of
these correction sets.
\end{proof}

We also use the elementary Boolean identity
\begin{equation}
 \sum_{B\subseteq[n]}(-1)^{|B|}P(\mathbf1_B)=0
 \quad\text{whenever}\quad \deg P<n.
 \label{eq:relative-boolean}
\end{equation}
It holds over $\mathbb F_3$: every monomial of degree less than $n$
omits some variable, and summing over that Boolean variable cancels
its contribution.

\subsection{The criterion}

Suppose that $S$ is a product-one-free sequence in $G$ of length
$2(a+b)+1$.  Let $C$ be the subsequence consisting of its occurrences
in $A$, and suppose
\[
 |C|=2b-d,
 \qquad d\ge2.
\]
Index the remaining occurrences by $[n]$, where
\[
 n=2a+d+1,
 \qquad D=d-2.
\]
Choose a linear complement to $A$ in $\mathfrak g$ and write each
outside occurrence as $x_i=v_i+w_i$, where $v_i$ belongs to the
complement, identified with $G/A$, and $w_i\in A$.
A block $B\subseteq[n]$ is called \emph{quotient-zero} if
$\sum_{i\in B}v_i=0$.

Let $\mathcal F$ consist of the nonempty quotient-zero blocks with
$\Omega(B)=\mathbb F_3$, and let $\mathcal T$ consist of the nonempty
quotient-zero blocks whose graph is a balanced triangle together with
isolated vertices.  For each block $B$, define the moment row
\begin{equation}
 r_D(B)=
 \left(\prod_{i\in H}\mathbf1_B(i)\right)_
 {H\subseteq[n],\ |H|\le D},
 \label{eq:relative-moment-row}
\end{equation}
including the coordinate indexed by $H=\varnothing$, which equals
one.  All vector spaces and row spans below are over $\mathbb F_3$.
Set
\begin{equation}
 M_D=\operatorname{span}\{r_D(B):B\in\mathcal F\},
 \qquad
 W_D=\sum_{B\in\mathcal T}(-1)^{|B|}r_D(B).
 \label{eq:relative-MW}
\end{equation}

\begin{theorem}[Relative moment criterion]
\label{thm:relative}
Under the preceding hypotheses,
\[
 W_D\notin M_D.
\]
More precisely, there is a linear functional on the moment-row space
which vanishes on $M_D$ and takes the value one on $W_D$.
Consequently, verifying $W_D\in M_D$ for a class of outside
configurations excludes every product-one-free sequence of the stated
length and subgroup occupancy having a configuration in that class.
No centrality assumption on $A$ is required.
\end{theorem}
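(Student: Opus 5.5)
The plan is to build the required functional from a polynomial identity over the Boolean cube $\{0,1\}^n$, using Lemma~\ref{lem:relative-coefficient-degree} for the $A$-part and the identity \eqref{eq:relative-boolean} for the outside part.

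\textbf{Step 1: the coefficient function.} Let $f_C$ be the coefficient function of $\prod_{c\in C}(1-X^c)$ on $A\cong\mathbb F_3^b$. By Lemma~\ref{lem:relative-coefficient-degree} it has degree at most $2b-|C|=d$, and $f_C(0)=1$, since $C$ is product-one-free and hence zero-sum-free in the abelian group $A$. Averaging over the derived direction gives
\[
P(y)=\sum_{t\in\mathbb F_3} f_C(y+tz).
\]
This function is invariant under $y\mapsto y+z$. Summing over a full line kills the top power of the $z$-coordinate, so $P$ should have degree at most $d-2=D$.

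\textbf{Step 2: the functional $\lambda$.} For a quotient-zero block $B$, put $y_B=-\sum_{i\in B}w_i$ modulo $\mathbb F_3z$. The ordering corrections of $B$ all lie in the $z$-direction, so the $z$-invariance of $P$ removes the quadratic dependence of the corrections on $\mathbf 1_B$. Hence $P(y_B)$ is a polynomial of degree at most $D$ in $\mathbf 1_B$. I expand it in the monomials $\prod_{i\in H}\mathbf 1_B(i)$ with $|H|\le D$. This defines $\lambda$ with $\lambda(r_D(B))=P(y_B)$ for every quotient-zero $B$, and $\lambda$ is the candidate functional, up to a global scalar.

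\textbf{Step 3: vanishing on $M_D$.} For a quotient-zero $B$, order it and place all of $C$ at the end. Crossing terms $\omega(x,c)$ then vanish, because $\sum_{i\in B}x_i\in A$ and $\omega(A,A)=0$. So the possible products of $B\cup C'$ are $\sum_{i\in B}x_i+tz+\sum C'$ with $t\in\Omega(B)$. Product-one-freeness forces $f_C\bigl(-\sum_{i\in B}x_i-tz\bigr)=0$ for every $t\in\Omega(B)$. If $B\in\mathcal F$, then $\Omega(B)=\mathbb F_3$, so every summand of $P(y_B)$ vanishes and $\lambda$ kills $M_D$.

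\textbf{Step 4: the value on $W_D$.} Multiply $P(y_B)$ by the quotient-zero indicator $\prod_{j=1}^a\bigl(1-(\sum_{i\in B}v_{ij})^2\bigr)$, which has degree $2a$. The product has degree at most $2a+D=n-3<n$, so \eqref{eq:relative-boolean} gives
\[
\sum_{B\text{ quotient-zero}}(-1)^{|B|}P(y_B)=0 .
\]
The sum splits by Lemma~\ref{lem:relative-order-types}:
\begin{itemize}
\item $B=\varnothing$ contributes $P(0)$;
\item full blocks contribute $0$;
\item edgeless and single-edge blocks contribute only through the $t$ outside $\Omega(B)$;
\item balanced-triangle blocks contribute $\lambda$ of their rows, summing to $\lambda(W_D)$.
\end{itemize}
The aim is to show that everything except the triangle sum totals a fixed nonzero constant, which would give $\lambda(W_D)\ne0$, and then rescale $\lambda$ to take the value one.

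\textbf{Main obstacle.} Step 4 is where I expect the real difficulty: the edgeless and single-edge contributions, and the value $P(0)=1+f_C(z)+f_C(-z)$. The complication is that single-edge and triangle blocks share the correction set $\{\pm1\}$, so the plain $z$-average cannot tell them apart. My first attempt would replace $P$ by a twisted average
\[
\sum_{t\in\mathbb F_3} c_t\, f_C(y+tz)
\]
with coefficients chosen to vanish on the edgeless and one-edge patterns. I would track the edge-type correction exactly, namely the fixed-order value plus $\pm\tfrac12\omega$. If that fails, I would instead pair each single-edge block with its two one-point subblocks, using an inclusion–exclusion over the endpoints, so that these contributions cancel and only the balanced triangles survive, with coefficient $(-1)^{|B|}$.
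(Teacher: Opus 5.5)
Your Steps 1--3 are correct and agree with the paper. Since $\sum_{t\in\mathbb F_3}1=\sum_t t=0$ and $\sum_t t^2=-1$, your $P(y_B)$ equals $-h(L_B)$. Here $g(w)=f_C(-w)$, $h(w)$ is the coefficient of $t^2$ in $g(w+tz)$, and $L_B=\sum_{i\in B}w_i$. Full blocks kill it, as you say.

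\textbf{The gap is Step 4: the identity you derive contains no information.} For every $H$ with $|H|\le D$, the polynomial $Q(X)\prod_{i\in H}X_i$ has degree at most $2a+D<n$. So the vector $\sum_{B\ \text{quotient-zero}}(-1)^{|B|}r_D(B)$ is already zero, and your identity is just $\lambda(0)=0$. It holds for every degree-$D$ function, whether or not $S$ is product-one-free. Accordingly nothing in it is pinned down:
\begin{itemize}
\item $P(0)=1+f_C(z)+f_C(-z)=-h(0)$ is not controlled, because the empty block realises only the correction $0$;
\item an edgeless block contributes $f_C(y_B+z)+f_C(y_B-z)$, which product-one-freeness does not control;
\item a single-edge block contributes $\lambda(r_D(B))$ exactly as a triangle does.
\end{itemize}
No fixed nonzero constant can be extracted. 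The twisted average does not help either. Write $f_C(sz+y)=f_0(y)+sf_1(y)+s^2f_2(y)$. For a general $f_C$, the combination $\sum_tc_tf_C(y+tz)$ has degree at most $D$ only if $\sum_tc_t=\sum_tc_tt=0$. That forces $c_0=c_1=c_{-1}$, which is your $P$ again.

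\textbf{The missing idea is to spend the two spare degrees of the Boolean identity on $g$ itself.} Since $n=2a+d+1$, apply \eqref{eq:relative-boolean} to $Q(X)\,g\bigl(\sum_iw_iX_i\bigr)$, which has degree at most $2a+d=n-1$. Product-one-freeness now acts pointwise:
\begin{itemize}
\item the empty block contributes exactly $g(0)=1$;
\item full and nonempty commuting blocks contribute $g(L_B)=0$, because $0\in\Omega(B)$;
\item when $\Omega(B)=\{-1,1\}$, the two zeros $g(L_B\pm z)=0$ of the quadratic $t\mapsto g(L_B+tz)$ give $g(L_B)=-h(L_B)$.
\end{itemize}
Hence $\sum_B(-1)^{|B|}h(L_B)=1$, the sum running over the quotient-zero single-edge and balanced-triangle blocks. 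Only at this point do you pass to the degree-$D$ functional $\lambda$ with $\lambda(r_D(B))=h(L_B)$.

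\textbf{You must still remove the single-edge blocks.} Your endpoint inclusion--exclusion is not a workable mechanism, since one-point subblocks are generally not quotient-zero and do not appear in the sum. The paper instead uses the edge-count polynomial $e(X)=\sum_{i<j,\ \omega(x_i,x_j)\ne0}X_iX_j$. The Boolean identities for $Q(X)e(X)\prod_{i\in H}X_i$, of degree at most $2a+2+D=n-1$, show that the signed sum of single-edge rows lies in $M_D$. This works because $e$ equals $1$ on single-edge blocks, $3=0$ on balanced triangles and $0$ on commuting blocks, while every remaining quotient-zero block is full. Applying $\lambda$, which vanishes on $M_D$, then gives $\lambda(W_D)=1$.
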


\begin{proof}
Since $A$ is abelian, its group multiplication is its additive
operation in Lie coordinates.  The subsequence $C$ is therefore
zero-sum-free in $A$.  By
Lemma~\ref{lem:relative-coefficient-degree}, its coefficient function
\[
 f(w)=[X^w]\prod_{c\in C}(1-X^c)
\]
has degree at most $d$ and satisfies $f(0)=1$.
Put $g(w)=f(-w)$.  Choose coordinates $w=s z+y$ on $A$, with $y$ in
a complement to $\mathbb F_3z$, and write the reduced polynomial as
\[
 g(s z+y)=g_0(y)+s g_1(y)+s^2g_2(y).
\]
Define $h(s z+y)=g_2(y)$.  Then $h$ is independent of $s$ and has
degree at most $d-2=D$.  Equivalently, $h(w)$ is the coefficient of
$t^2$ in $g(w+t z)$.

For a quotient-zero block $B$, put
$L_B=\sum_{i\in B}w_i\in A$.  Its products are
$L_B+t z$, with $t\in\Omega(B)$.  Product-one freeness implies
\begin{equation}
 g(L_B+t z)=0
 \qquad(B\ne\varnothing,\quad t\in\Omega(B)).
 \label{eq:relative-POF-vanishing}
\end{equation}
Indeed, a nonzero value of $f(-L_B-t z)$ would give a subset of $C$
with additive sum $-L_B-t z$.  Concatenating it with the corresponding
ordering of $B$ would produce a nonempty product-one subsequence of
$S$.  The concatenation argument only uses that the product of $B$
and the selected product from $C$ lie in $A$; individual outside
occurrences are not required to commute with $A$.

The polynomial $t\mapsto g(L_B+t z)$ has degree at most two.
If $B\in\mathcal F$, it vanishes at all three elements of
$\mathbb F_3$, so
\begin{equation}
 g(L_B)=h(L_B)=0.
 \label{eq:relative-full-vanishing}
\end{equation}
If $B$ is noncommuting and nonfull, its correction set is
$\{-1,1\}$ by Lemma~\ref{lem:relative-order-types}.  Substituting
these two values in \eqref{eq:relative-POF-vanishing} gives
\begin{equation}
 g(L_B)=-h(L_B).
 \label{eq:relative-two-values}
\end{equation}
If $B$ is nonempty and commuting, its correction set is $\{0\}$ and
$g(L_B)=0$.

Write $v_i=(v_{i1},\ldots,v_{ia})$ in quotient coordinates and put
\[
 Q(X_1,\ldots,X_n)=
 \prod_{j=1}^{a}\left(1-
       \left(\sum_{i=1}^{n}v_{ij}X_i\right)^2\right).
\]
On the Boolean cube, $Q(\mathbf1_B)$ is one for quotient-zero
blocks and zero for all other blocks.  Apply
\eqref{eq:relative-boolean} to
\[
 P(X)=Q(X)g\!\left(\sum_{i=1}^{n}w_iX_i\right),
 \qquad \deg P\le 2a+d=n-1.
\]
The empty block contributes $g(0)=1$.  The full and nonempty
commuting blocks contribute zero.  If $\mathcal N$ denotes the
nonempty quotient-zero blocks that are noncommuting and nonfull,
equation~\eqref{eq:relative-two-values} therefore gives
\begin{equation}
 \sum_{B\in\mathcal N}(-1)^{|B|}h(L_B)=1.
 \label{eq:relative-nonfull-sum}
\end{equation}

Boolean multilinearization of the degree-at-most-$D$ polynomial
$h(\sum_i w_iX_i)$ gives a linear functional $\lambda$ on the moment
rows satisfying
\[
 \lambda(r_D(B))=h(L_B)
 \qquad\text{for every }B\subseteq[n].
\]
Here $L_B=\sum_{i\in B}w_i$ may also be used for blocks that are not
quotient-zero.  Replacing positive powers of $X_i$ by $X_i$ on the
Boolean cube does not increase the total degree.  Equation
\eqref{eq:relative-full-vanishing} shows that
$\lambda(M_D)=0$.

It remains to remove the single-edge blocks from
\eqref{eq:relative-nonfull-sum}.  Let $\mathcal E$ be the family of
nonempty quotient-zero blocks whose graph consists of one edge and
isolated vertices.  By Lemma~\ref{lem:relative-order-types},
$\mathcal N$ is the disjoint union of $\mathcal E$ and
$\mathcal T$.
Consider the unweighted edge-count polynomial
\[
 e(X)=\sum_{\substack{i<j\\\omega(x_i,x_j)\ne0}}X_iX_j.
\]
For every $H\subseteq[n]$ with $|H|\le D$, the polynomial
\[
 Q(X)e(X)\prod_{i\in H}X_i
\]
has degree at most $2a+2+D=n-1$.  Combining its Boolean identities
\eqref{eq:relative-boolean} coordinate by coordinate yields
\[
 0=\sum_{B\subseteq[n]}(-1)^{|B|}
       Q(\mathbf1_B)e(\mathbf1_B)r_D(B).
\]
The edge count is zero on commuting blocks, one on blocks in
$\mathcal E$, and three, hence zero in $\mathbb F_3$, on blocks in
$\mathcal T$.  All remaining quotient-zero blocks are full.
Consequently,
\begin{equation}
 \sum_{B\in\mathcal E}(-1)^{|B|}r_D(B)
 =-\sum_{B\in\mathcal F}(-1)^{|B|}
       e(\mathbf1_B)r_D(B)
 \ \in M_D.
 \label{eq:relative-single-edge-removal}
\end{equation}
Applying $\lambda$ to
\eqref{eq:relative-nonfull-sum} and using
\eqref{eq:relative-single-edge-removal} gives $\lambda(W_D)=1$.
Since $\lambda(M_D)=0$, this proves the theorem.
\end{proof}

Theorem~\ref{thm:relative} is a necessary condition on a
product-one-free sequence, not a universal assertion that the moment
membership $W_D\in M_D$ holds.  Each geometric application must
establish that membership for its specified outside configurations.
The single-edge cancellation in
\eqref{eq:relative-single-edge-removal} is unweighted; it does not,
without a further argument, cancel an arbitrary weighted sum of
single-edge moment rows.

\section{Initial subspace bounds and the active-line improvement}
\label{sec:line-caps}

Throughout this section, let
\[
 G=E_2\times C_3^r,\qquad r\geq1,
\]
and suppose that $S$ is a product-one-free sequence of length $2r+11$
with exactly $2r-1$ terms in $Z(G)$. Write
$P=G/Z(G)\cong\F^4$ for its active quotient, with its nondegenerate
alternating form $\omega$. For $U\leq P$, let $N_U$ be its full
inverse image and let $n_S(U)$ count the noncentral occurrences of $S$
whose active projections lie in $U$. Thus
\begin{equation}\label{eq:lc-count}
 |S\cap N_U|=(2r-1)+n_S(U).
\end{equation}
All occurrences, including opposite active vectors, are counted with
multiplicity. In particular, the bound on a line concerns both of its
nonzero vectors together.

\subsection{Two subgroup bounds}

We first record the elementary subgroup arguments needed for the initial
caps. The only previously published nonabelian Davenport value used in
this section is
\begin{equation}\label{eq:lc-published}
 \dc(H_{27}\times C_3^s)=2s+6\qquad(s\geq0),
\end{equation}
proved in~\cite{VolkmannH27}. No value of $\dc(E_2\times C_3^s)$
is assumed.

\begin{lemma}[Ordinary subgroup saturation]\label{lem:lc-saturation}
Let $N\trianglelefteq K$ be finite groups and let $T$ be product-one-free
with $|T|\geq\dc(N)+\dc(K/N)+1$. Then
$|T\cap N|\leq\dc(N)-1$.
\end{lemma}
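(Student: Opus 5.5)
The plan is to argue by contradiction, using the quotient map onto $K/N$ to shorten $T$ while preserving product-one-freeness. Suppose $|T\cap N|\geq\dc(N)$. Choose any subsequence $T_1$ of $T\cap N$ of length exactly $\dc(N)$. The remaining sequence $T_2=T\setminus T_1$ then has length at least $\dc(K/N)+1$.

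The first key step is to extract many disjoint short blocks from $T_2$ whose products land in $N$. Project $T_2$ to $K/N$. Since its length exceeds $\dc(K/N)$, the projection contains a nonempty product-one subsequence. Taking an ordering of it, the corresponding ordered product of the original terms lies in $N$. Removing those terms and repeating while at least $\dc(K/N)+1$ terms remain outside $T_1$ would give several such blocks. For the lemma, however, a single extraction suffices, so it is enough to find one nonempty block $B\subseteq T_2$ together with an ordering whose product $g$ lies in $N$.

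The second step is to append $g$ to $T_1$, forming a sequence $T_1\cdot g$ over $N$ of length $\dc(N)+1$. This sequence must contain a nonempty product-one subsequence. If that subsequence avoids $g$, it is already a product-one subsequence of $T$, which is a contradiction. If it uses $g$, some ordering of its terms has product one. We then replace $g$ by the consecutive ordered block $B$, which gives an ordering of a nonempty subsequence of $T$ with product one, again a contradiction. The only point needing care is that $g$ is a product of terms of $T$ disjoint from $T_1$, so the substitution produces a genuine subsequence of $T$ with indexed occurrences.

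The main (mild) obstacle is bookkeeping for the lengths. We need $|T_2|\geq|T|-\dc(N)\geq\dc(K/N)+1$, which is exactly where the hypothesis $|T|\geq\dc(N)+\dc(K/N)+1$ enters, applied under the assumption $|T\cap N|\geq\dc(N)$. Normality of $N$ is used so that $K/N$ is a group and a product-one block in the quotient means the product lies in $N$. Hence $|T\cap N|\leq\dc(N)-1$.
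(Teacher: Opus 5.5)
Your proof is correct and follows essentially the paper's route: you take $\dc(N)$ terms of $T$ in $N$, use $\dc(K/N)$ to extract a disjoint nonempty block $B$ with product $g\in N$, and then use the fact that a sequence of length $\dc(N)+1$ over $N$ has a product-one subsequence. The only difference is cosmetic: you append $g$ and substitute the ordered block $B$ for it, whereas the paper appends $h^{-1}$ and cyclically rotates to show that the subsequence products of $T\cap N$ cover $N$, then cancels $g$ by concatenation; truncating to exactly $\dc(N)$ terms also lets you handle $|T\cap N|>\dc(N)$ in the same argument.
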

\begin{proof}
More than $\dc(N)$ terms in $N$ are impossible. If $C=T\cap N$ has
length $\dc(N)$, its subsequence products, including the empty product,
cover $N$. Indeed, append $h^{-1}$ for any $h\in N$. A product-one
subsequence must use the appended occurrence, and cyclically rotating a
product-one ordering places that occurrence last. The other selected
terms then have product $h$.

The remaining at least $\dc(K/N)+1$ terms admit a nonempty subsequence
with a product in $N$. That product can be cancelled by a disjoint
subsequence of $C$, a contradiction.
\end{proof}

\begin{lemma}[Abelian defect-one exclusion]\label{lem:lc-abelian}
Let $K$ have exponent three and nilpotency class at most two. Let
$A\leq K$ be abelian with $K'\leq A$, $|A|=3^b$ and
$|K/A|=3^a$, where $b\geq1$. If $T$ is product-one-free of length
$2(a+b)+1$, then
\[
 |T\cap A|\leq2b-2.
\]
\end{lemma}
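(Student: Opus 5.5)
The plan is to rule out the three possible values $|T\cap A|\in\{2b,2b-1\}$ above the claimed bound. Values above $2b$ are impossible by Olson's theorem, since $A$ is elementary abelian of rank $b$. The value $2b$ is handled by Lemma~\ref{lem:lc-saturation}, and $2b-1$ by a degree-one version of the Boolean argument in the proof of Theorem~\ref{thm:relative}.

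For the first step, recall that $A$ is an abelian group of exponent three in which group multiplication is addition in Lie coordinates. Hence $A\cong\F^b$ and $\dc(A)=2b$. The quotient $K/A$ is abelian because $K'\le A$, and it has exponent three, so $K/A\cong\F^a$ and $\dc(K/A)=2a$. Since $|T|=2(a+b)+1=\dc(A)+\dc(K/A)+1$ and $A$ is normal (as noted in Section~\ref{sec:relative-moments}), Lemma~\ref{lem:lc-saturation} gives $|T\cap A|\le 2b-1$.

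It remains to exclude $|C|=2b-1$, where $C=T\cap A$. Index the $n=2a+2$ outside occurrences as in Section~\ref{sec:relative-moments}, writing $x_i=v_i+w_i$. Let $f(w)=[X^w]\prod_{c\in C}(1-X^c)$. Since $C$ is zero-sum-free in $A$, Lemma~\ref{lem:relative-coefficient-degree} shows that $f$ has degree at most $2b-(2b-1)=1$ and that $f(0)=1$. Put $g(w)=f(-w)$, which is again affine.

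For a nonempty quotient-zero block $B$, the concatenation argument from the proof of Theorem~\ref{thm:relative} shows that $g(L_B+tz)=0$ for every $t\in\Omega(B)$. By Lemma~\ref{lem:relative-order-types}, $\Omega(B)$ is one of $\{0\}$, $\{\pm1\}$ or $\F$. In the first case we get $g(L_B)=0$ directly. In the other two cases, $t\mapsto g(L_B+tz)$ is a polynomial of degree at most one vanishing at two points, hence identically zero, so again $g(L_B)=0$.

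Now apply the Boolean identity \eqref{eq:relative-boolean} to
\[
P(X)=Q(X)\,g\Bigl(\sum_i w_iX_i\Bigr),
\]
where $Q$ is the quotient-zero indicator polynomial from that proof. Its degree is at most $2a+1=n-1$. The empty block contributes $g(0)=1$, and every other quotient-zero block contributes $0$. This gives $1=0$ in $\F$, a contradiction.

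The only point needing care, which is the mild obstacle, is the case $a=0$, where there are no quotient coordinates. Then $Q=1$ and $n=2$, and $P$ has degree at most $1=n-1$, so the argument still applies verbatim.
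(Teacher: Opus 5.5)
Your core mechanism matches the paper's: an affine $g$ with $g(0)=1$, vanishing of $g(L_B)$ on nonempty quotient-zero blocks, and the degree bound $2a+1<2a+2$ against the Boolean identity. The first difference is organisational. The paper does not split into cases. It takes any $2b-1$ occurrences of $T\cap A$ as $C$ and lets the remaining $2a+2$ occurrences be the Boolean variables; some of these may lie in $A$, with $v_i=0$. So $|T\cap A|\ge 2b$ needs neither Olson's theorem nor Lemma~\ref{lem:lc-saturation}. Your split is valid, only longer. Your $a=0$ remark is moot: then $K=A$, there are no outside occurrences, and the first step already excludes the configuration.

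The second difference is a genuine gap relative to the statement. To get $g(L_B)=0$ you use the scalar correction set $\Omega(B)\subseteq\F$, the concatenation argument written with $z$, and Lemma~\ref{lem:relative-order-types}. All of these live under the standing hypothesis of Section~\ref{sec:relative-moments} that $K'=\langle z\rangle$ has order three. The lemma assumes only class at most two and $K'\le A$, so $K'$ may be larger. For example, take $K=H_{27}\times H_{27}$, so $K'\cong C_3^2$, with $A$ a product of abelian normal subgroups of order nine. The ordered products of $B$ are then $L_B+c$ with $c$ ranging over a subset of $K'$ that need not lie on a line, and the trichotomy $\{0\}$, $\{\pm1\}$, $\F$ is not available.

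The repair is the paper's reversal argument. An ordering of $B$ and its reverse have Lie coordinates $L_B+c$ and $L_B-c$ with $c\in K'\le A$, and $g$ vanishes at both. Since $g$ is affine, $2g(L_B)=g(L_B+c)+g(L_B-c)=0$. This needs no classification of correction sets and treats commuting blocks uniformly. With that replacement your proof covers the lemma as stated. Without it, it proves only the case $|K'|\le 3$, which is the only case used in Proposition~\ref{prop:lc-initial}.
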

\begin{proof}
Suppose that $T\cap A$ contains a subsequence $C$ of length $2b-1$.
In the additive group algebra of $A$, put
\[
 f(w)=[X^w]\prod_{c\in C}(1-X^c),\qquad g(w)=f(-w).
\]
Lemma~\ref{lem:relative-coefficient-degree} makes $g$ affine, and
$g(0)=1$ because $C$ is zero-sum-free. Choose a linear complement to
$A$ in the class-two Lie coordinates of $K$ and write the remaining
$2a+2$ terms as $v_i+w_i$, with $v_i\in\F^a$ and $w_i\in A$.
The complement need not be a Lie subalgebra.

Consider a nonempty selection $B$ with $\sum_{i\in B}v_i=0$.
Every ordered product of its terms belongs to $A$. If its Lie coordinate
is $Q$, then $g(Q)=0$: otherwise a subsequence of $C$ would have product
$-Q$ and cancel it. An ordering and its reversal have Lie coordinates
$L+c$ and $L-c$, where $L=\sum_{i\in B}w_i$ and $c\in K'\leq A$.
Affinity, and the invertibility of two, therefore give $g(L)=0$.
Consequently the polynomial
\[
 \prod_{j=1}^{a}\left(1-
     \left(\sum_{i=1}^{2a+2}v_{ij}X_i\right)^2\right)
 g\!\left(\sum_{i=1}^{2a+2}w_iX_i\right)
\]
is the indicator of the empty selection on the Boolean cube. Its degree
is at most $2a+1$, whereas the unique multilinear representation of
that indicator, $\prod_i(1-X_i)$, has degree $2a+2$. Boolean
multilinearization cannot increase degree, giving a contradiction.
\end{proof}

The identity $\dc(C_3^k)=2k$ can be used here without an additional
nonabelian input: the presentation
$\F[C_3^k]\cong\F[T_1,\ldots,T_k]/(T_1^3,\ldots,T_k^3)$
gives the upper bound, since an augmentation product of length $2k+1$
vanishes while a zero-sum-free sequence would give identity coefficient
one. Two copies of each basis vector give the lower bound.

\begin{proposition}[Initial caps]\label{prop:lc-initial}
Under the standing assumptions,
\[
\begin{array}{c|c}
 U& n_S(U)\text{ is at most}\\\hline
 \text{line}&3\\
 \text{isotropic plane}&5\\
 \text{nondegenerate plane}&6\\
 \text{hyperplane}&8.
\end{array}
\]
\end{proposition}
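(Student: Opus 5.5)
The plan is to bound each $n_S(U)$ by choosing a suitable subgroup and applying one of the two subgroup bounds, Lemma~\ref{lem:lc-saturation} or Lemma~\ref{lem:lc-abelian}, using the count \eqref{eq:lc-count}. Throughout we work in the Lie coordinates $G=P\oplus Z$ of \eqref{eq:law}. For $U\le P$ we have $N_U=U\oplus Z$, and $|Z|=3^{r+1}$. Every $N_U$ contains $G'=\F z$, so it is normal in $G$. In each case the relevant length identity should come out exactly equal to $|S|=2r+11$; this confirms that the four caps are the sharp outputs of the two lemmas.

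\emph{Isotropic cases.} If $U$ is a line or an isotropic plane, then $\omega$ vanishes on $U\oplus Z$, so $A=N_U$ is abelian and contains $G'$. Apply Lemma~\ref{lem:lc-abelian} with $K=G$.
\begin{itemize}
\item For a line, $b=r+2$ and $a=3$. Then $2(a+b)+1=2r+11$, and we get $|S\cap N_U|\le 2r+2$, hence $n_S(U)\le 3$.
\item For an isotropic plane, $b=r+3$ and $a=2$. Again $2(a+b)+1=2r+11$, and we get $|S\cap N_U|\le 2r+4$, hence $n_S(U)\le5$.
\end{itemize}

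\emph{Nondegenerate plane.} If $U$ is a nondegenerate plane, then $N_U=(U\oplus\F z)\times\F^r$. The first factor is a copy of $H_{27}$ in Lie coordinates, and the second is a central direct factor. So $N_U\cong H_{27}\times C_3^r$, and \eqref{eq:lc-published} gives $\dc(N_U)=2r+6$. The quotient $G/N_U\cong C_3^2$ has Davenport constant $4$. Since $2r+6+4+1=2r+11=|S|$, Lemma~\ref{lem:lc-saturation} gives $|S\cap N_U|\le 2r+5$, so $n_S(U)\le 6$.

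\emph{Hyperplane.} If $U$ is a hyperplane, then $U=\ell^\perp$ for a line $\ell$, and the restriction of $\omega$ to $U$ has radical exactly $\ell$. Choose a complement $\Pi$ to $\ell$ in $U$; it is a nondegenerate plane. Then
\[
N_U=(\Pi\oplus\F z)\times(\ell\oplus\F^r),
\]
where the second factor is central, because $\ell$ pairs trivially with all of $U$, and elementary abelian. So $N_U\cong H_{27}\times C_3^{r+1}$, and \eqref{eq:lc-published} gives $\dc(N_U)=2r+8$. The quotient $G/N_U\cong C_3$ has Davenport constant $2$. Since $2r+8+2+1=2r+11$, Lemma~\ref{lem:lc-saturation} gives $|S\cap N_U|\le 2r+7$, so $n_S(U)\le8$.

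The main point needing care is verifying these direct-product decompositions inside the Lie model. Specifically, one must check that $\ell\oplus\F^r$ commutes with $\Pi\oplus\F z$ and that the multiplication of the stated direct product agrees with \eqref{eq:law}. Both follow because $\omega$ vanishes on the relevant cross pairs, so the ordering term in \eqref{eq:law} between the two factors vanishes. The remaining numerics are immediate.
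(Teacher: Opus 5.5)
Your proposal is correct and follows the paper's proof essentially verbatim: Lemma~\ref{lem:lc-abelian} for lines and isotropic planes (with $b=r+1+k$, $a=4-k$), and Lemma~\ref{lem:lc-saturation} with \eqref{eq:lc-published} for nondegenerate planes and hyperplanes, where in each case the length $2r+11$ exactly matches the lemma's hypothesis. Your explicit Lie-coordinate checks of $N_U\cong H_{27}\times C_3^{r}$ and $N_U\cong H_{27}\times C_3^{r+1}$ (via the radical $\ell$ of $\omega|_{\ell^\perp}$) simply spell out what the paper asserts directly.
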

\begin{proof}
For an isotropic subspace of dimension $k\in\{1,2\}$, the subgroup
$N_U$ is elementary abelian of dimension $b=r+1+k$, and
$G/N_U\cong C_3^{4-k}$. Since $2(b+4-k)+1=2r+11$,
Lemma~\ref{lem:lc-abelian} gives
$|S\cap N_U|\leq2r+2k$. Subtracting the $2r-1$ central terms
proves the first two bounds.

For a nondegenerate plane, $N_U\cong H_{27}\times C_3^r$ and
$G/N_U\cong C_3^2$. Equation~\eqref{eq:lc-published} and
Lemma~\ref{lem:lc-saturation} give
$|S\cap N_U|\leq2r+5$, hence $n_S(U)\leq6$.
For a hyperplane, the restricted alternating form has rank two and a
one-dimensional radical, so
$N_U\cong H_{27}\times C_3^{r+1}$ and $G/N_U\cong C_3$.
The same argument gives $|S\cap N_U|\leq2r+7$, hence
$n_S(U)\leq8$. In both applications the required length is exactly
$\dc(N_U)+\dc(G/N_U)+1=2r+11$.
\end{proof}

\subsection{A quadratic obstruction above an abelian subgroup}

The following elementary argument will improve the line cap independently
of the signs of the three terms on a saturated line.

\begin{lemma}[Quadratic-fibre obstruction]\label{lem:lc-quadratic}
Let $K$ have exponent three and nilpotency class two, with
$K'=\langle z\rangle$ of order three. Let $A\geq K'$ be abelian,
$|A|=3^b$, and $|K/A|=3^a$. Suppose that $T$ is product-one-free,
$|T|=2(a+b)+1$, and $|T\cap A|=2b-2$.
Then no nonempty block of $T\setminus A$ with quotient sum zero in
$K/A$ has ordering-correction set $\F$.
\end{lemma}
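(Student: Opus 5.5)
Suppose, for contradiction, that $B$ is a nonempty block of $T\setminus A$ with $\sum_{i\in B}v_i=0$ in $K/A$ and $\Omega(B)=\F$. We plan to derive the contradiction from the abelian subsequence $C=T\cap A$, of length $2b-2$, using the same coefficient-function device as in Lemma~\ref{lem:lc-abelian} and Theorem~\ref{thm:relative}. Since $A$ is abelian, $C$ is zero-sum-free in $A$ under the additive operation. Put
\[
 f(w)=[X^w]\prod_{c\in C}(1-X^c),\qquad g(w)=f(-w).
\]
By Lemma~\ref{lem:relative-coefficient-degree}, $g$ has degree at most $2$ and satisfies $g(0)=1$.

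The first step is to show that $B$ forces a vanishing condition on $g$. Every ordered product of $B$ lies in $A$ and equals $L_B+tz$ with $t\in\Omega(B)=\F$, where $L_B=\sum_{i\in B}w_i$. Product-one freeness gives $g(L_B+tz)=0$ for all $t\in\F$: otherwise a subsequence of $C$ with sum $-L_B-tz$ could be concatenated with the corresponding ordering of $B$. Hence the whole coset $L_B+\F z$ lies in the zero set of $g$. Since $t\mapsto g(L_B+tz)$ has degree at most two and vanishes at all three points, its three coefficients vanish. In particular the $t^2$-coefficient $h(L_B)$, in the notation of Theorem~\ref{thm:relative}, is zero. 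Because $\deg g\le 2$, this $h$ is a constant $h_0$, so $h_0=0$, which means $g$ contains no $s^2$ term in the coordinates $w=sz+y$.

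The second step converts this into a degree drop and a counting contradiction. Once $h_0=0$, every nonempty quotient-zero block $B'$ of the outside terms satisfies $g(L_{B'})=0$. For full blocks this is the step above. For blocks with correction set $\{\pm1\}$, equation~\eqref{eq:relative-two-values} gives $g(L_{B'})=-h(L_{B'})=0$. For commuting blocks the correction set is $\{0\}$, so $g(L_{B'})=0$ directly. The outside terms number $n=2a+3$. Form
\[
 P(X)=Q(X)\,g\!\left(\sum_i w_iX_i\right),
\]
with $Q$ the quotient-zero indicator as in Theorem~\ref{thm:relative}. Its degree is at most $2a+2<n$, so the Boolean identity~\eqref{eq:relative-boolean} gives $\sum_{B'}(-1)^{|B'|}P(\mathbf 1_{B'})=0$. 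But only the empty block contributes, with value $g(0)=1$, which is a contradiction.

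The main obstacle is the first step's passage from vanishing on one coset to $h_0=0$. This needs the degree bound $\deg g\le 2$ to make the $s^2$-coefficient a genuine constant, that is, $g_2(y)$ of degree $\le 0$. It also requires choosing the coordinates on $A$ exactly as in the proof of Theorem~\ref{thm:relative}, so that $h$ is well defined independently of the complement to $\F z$. I would check this carefully, including the degenerate case $b=1$, where $A=K'$, $C$ has length $0$, $g$ is the indicator of $0$ on $\F z$, and vanishing on a full coset is immediately contradictory since $0$ lies in it.
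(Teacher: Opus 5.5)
Your proposal is correct and follows essentially the same route as the paper's proof. A full quotient-zero block kills the constant $t^2$-coefficient of $g(w+tz)$, so $g$ becomes affine along $z$-lines and vanishes at $L_{B'}$ for every nonempty quotient-zero block; the degree-$(2a+2)$ empty-selection indicator in $2a+3$ variables then gives the contradiction. The only cosmetic difference is that for nonfull noncommuting blocks you invoke Lemma~\ref{lem:relative-order-types} and \eqref{eq:relative-two-values}, whereas the paper argues directly that such correction sets contain $\pm1$ by an adjacent exchange and reversal.
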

\begin{proof}
Let $C=T\cap A$ and
\[
 g(w)=[X^{-w}]\prod_{c\in C}(1-X^c).
\]
By Lemma~\ref{lem:relative-coefficient-degree}, $g$ has degree at most
two and $g(0)=1$. For any nonempty quotient-zero outside block $B$,
write $L_B\in A$ for its additive Lie sum and $\Omega(B)$ for its
scalar ordering-correction set. Product-one freeness implies
\begin{equation}\label{eq:lc-fibre-zeros}
 g(L_B+qz)=0\qquad(q\in\Omega(B)).
\end{equation}
This uses only that the terms of $C$ commute with one another; $A$ need
not be central.

The coefficient of $t^2$ in $g(w+tz)$ is independent of $w$. If one
outside quotient-zero block is full,\footnote{Here and below, a block is
\emph{full} when its scalar ordering-correction set is all of $\F$.}
\eqref{eq:lc-fibre-zeros} forces that coefficient to vanish. Thus $g$
is affine on every line parallel to $z$.
For any other quotient-zero outside block, commuting terms have correction
zero. Noncommuting terms have at least two correction values by an adjacent
exchange, and their correction set is stable under negation by reversal;
over $\F$ it therefore contains $1$ and $-1$. In either case,
\eqref{eq:lc-fibre-zeros} and affinity give $g(L_B)=0$.

There are $2a+3$ outside terms. Writing them as $v_i+w_i$ in a linear
splitting over $A$, the polynomial
\[
 \prod_{j=1}^{a}\left(1-
       \left(\sum_i v_{ij}X_i\right)^2\right)
       g\!\left(\sum_i w_iX_i\right)
\]
is again the empty-selection indicator. Its degree is at most $2a+2$,
strictly smaller than the number of Boolean variables, a contradiction.
\end{proof}

\subsection{The finite nine-term line lemma}

\begin{lemma}[Nine terms outside a line]\label{lem:lc-nine}
Let $L=\langle x\rangle$ be a line in a nondegenerate symplectic
space $P\cong\F^4$. Let $R$ be a sequence of nine vectors outside $L$
satisfying the following bounds, all counted with multiplicity:
\begin{enumerate}
\item every line other than $L$ contains at most three terms;
\item every isotropic plane through $L$ contains at most two terms,
and every nondegenerate plane through $L$ at most three;
\item every plane not containing $L$ contains at most five terms;
\item every hyperplane through $L$ contains at most five terms.
\end{enumerate}
Then $R$ has a nonempty block $B$ with $\sum B\in L$ and
$\Omega(B)=\F$.
\end{lemma}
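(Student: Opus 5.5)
We reduce the statement to a finite check in $P/L$ and the symplectic structure of $L^\perp$, organized so that most configurations are settled by a short combinatorial argument and the residue by exhaustive search. By Lemma~\ref{lem:relative-order-types}, a block $B$ is full as soon as its nonorthogonality graph contains two disjoint edges, a star with two edges, or an unbalanced triangle. It therefore suffices to find a nonempty block $B$ with $\sum B\in L$ whose graph contains one of these patterns. Since adding isolated vertices never changes fullness, any such block can be padded with orthogonal terms, provided the sum condition is preserved.

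\textbf{Step 1: pass to the quotient.} We first project $R$ to $\bar P=P/L\cong\F^3$. The condition $\sum B\in L$ becomes a zero-sum condition in $\F^3$, where nine terms lie well above $\dc(C_3^3)=6$. The bounds (2) and (4) control the projected multiplicities: a point of $\bar P$ is a plane through $L$ and carries at most three terms, and a line of $\bar P$ is a hyperplane through $L$ and carries at most five terms. The form $\omega$ does not descend to $\bar P$. It does restrict, however, to the nondegenerate plane structure of $L^\perp/L$, and we record for each term whether it lies in $L^\perp$, i.e.\ whether $\omega(\cdot,x)=0$. Among the nine terms we expect many disjoint zero-sum blocks in $\F^3$. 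By Olson-type counting, any $7$ terms contain a zero-sum subblock, and after removal the remaining terms contain another.

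\textbf{Step 2: force an edge pattern.} From the abundance of zero-sum blocks in $\bar P$, we aim to find one whose lifts contain two disjoint nonorthogonal pairs, or a vertex of degree at least two. The occupancy bounds (1) and (3) are used here. A plane not containing $L$ with at most five terms, and lines with at most three terms, prevent all terms from lying in one isotropic subspace. An edgeless zero-sum block would lie in an isotropic subspace, of dimension at most two, so by (3) it has at most five terms. With nine terms, we try to exhibit a zero-sum block of size at least six, for instance the union of two disjoint zero-sum blocks or the complement of a small one. Such a block is not isotropic and therefore has an edge. A block whose graph is only a single edge or a balanced triangle is the remaining case, and we would enlarge it by merging it with a second disjoint zero-sum block that touches it by an edge. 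This should produce a star or two disjoint edges.

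\textbf{Step 3: finite verification of residual cases.} The remaining configurations are those in which every quotient-zero block has a graph that is empty, a single edge, or a balanced triangle. We expect these to be very rigid: by (2) and (4), all terms are concentrated in few planes through $L$. Up to $\mathrm{Sp}_4(\F)$ stabilizing $L$, together with sign changes of individual terms (which preserve the pairing up to sign and the line sums), such configurations can be enumerated. The check is finite: nine vectors among the $78$ outside $L$, up to symmetry, pruned by the caps. We would therefore verify the statement by exhaustive search, as the paper does for its other finite components, and give a hand coverage argument for the reduction to that search.

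The main obstacle is Step 2. The weighted interaction between balanced triangles and the sum condition makes a clean hand argument unlikely. I expect the honest proof to be a case split on the multiplicity profile over the planes through $L$, closed by computer.
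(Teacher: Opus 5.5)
Your proposal does not yet prove the lemma, and the part that would have to carry the whole argument is set up unsoundly.

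Steps~1 and~2 are not yet arguments. The claim that nine terms yield two disjoint zero-sum blocks in $P/L$ does not follow from ``any $7$ terms contain a zero-sum subblock'': that block may use all seven terms, leaving only two. Without occupancy bounds, $e_1^5e_2^2e_3^2$ in $C_3^3$ has no two disjoint zero-sum blocks. The merging of a single-edge or balanced-triangle block with a second block that ``touches it by an edge'' is never carried out. Also, ``contains a star with two edges'' must mean an induced path on three vertices; a balanced triangle contains a two-edge star and is not full.

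More seriously, the residual class of Step~3 is precisely the class of counterexamples to the lemma. These are the configurations in which every block with sum in $L$ has an empty, single-edge or balanced-triangle graph. So Steps~1--2 reduce nothing, and the proof rests entirely on the search. There your proposed symmetry is wrong: negating an individual term preserves the caps but not the conclusion.
\begin{itemize}
\item It moves block sums out of $L$: $\{v,-v\}$ has sum $0$, while $\{v,v\}$ has sum $-v\notin L$.
\item It turns balanced triangles into unbalanced ones: negating $w$ in a balanced triple $(u,v,w)$ flips two of the three pairings.
\end{itemize}
So checking one representative per sign class says nothing about the others. Only the stabilizer of $L$ in $\operatorname{Sp}_4(\F)$ may be used; it does contain the simultaneous negation $-1$.

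The paper's proof is exactly a search of this kind, made sound and feasible as follows.
\begin{itemize}
\item It first converts the witness. If $\sum B=\pm x$, appending $\mp x$ gives an active zero-sum block with the same correction set: rotate the new term to the end, where it contributes $\frac12\omega(\sum B,-\sum B)=0$. In a zero-sum block no vertex has degree one, so the only nonfull noncommuting shape is a balanced triangle. Hence a witness exists if and only if $R\cup\{x\}$ or $R\cup\{-x\}$ contains a zero-sum block with two disjoint nonorthogonal pairs. This removes the single-edge case that worried you in Step~2.
\item Bound~(4) gives a term $y$ with $\omega(x,y)\ne0$. After choosing the generator of $L$, this pair is normalized to a fixed pair.
\item The other eight terms are enumerated as sorted multisets with hereditary pruning: cap violations and an already present witness both persist under extension.
\item Prefixes are canonicalized under the maps stabilizing $L$. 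The paper proves this canonicalization is lossless via a lexicographically least representative.
\item Two separately written programs exhaust the search, with identical prefix counts ending in $0$ at length nine.
\end{itemize}
Dropping Steps~1--2, replacing the sign-change reduction by the $L$-stabilizer, and supplying such a lossless canonicalization argument and a completed run would turn your Step~3 into the paper's proof.
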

\begin{proof}[Finite verification and its coverage]
First observe an equivalent witness condition: at least one of
$R\cup\{x\}$ and $R\cup\{-x\}$ contains an active zero-sum block
with two disjoint nonorthogonal pairs.
For a zero-sum block, its nonorthogonality graph has no vertex of degree
one, since pairing that vertex with the total sum would give a nonzero
value. A graph with no two disjoint edges is a star or a triangle, with
possible isolates. Consequently a zero-sum block without the specified
matching is either commuting or a balanced triangle with isolates, and
its correction set is respectively $\{0\}$ or $\{1,-1\}$.
Conversely, two disjoint nonorthogonal pairs give all three corrections
by their independent adjacent exchanges.

If $\sum B=\pm x$, append the opposite generator. This preserves the
correction set: in a zero-active-sum block, cyclic rotation changes no
ordering correction, and putting the appended term last contributes
$\frac12\omega(\sum B,-\sum B)=0$.
The same deletion argument proves the reverse implication. A block
with sum zero needs no appended term.

Here is the complete finite search. At least one term $y$ of $R$ pairs
nontrivially with $x$, since otherwise all nine terms lie in the
hyperplane $x^\perp$, violating (4). Choose the sign of $x$ so that
$\omega(x,y)=1$, and use a symplectic transformation to fix this pair.
For the primary implementation the coordinates are $(a_1,a_2,b_1,b_2)$,
encoded by $a_1+3a_2+9b_1+27b_2$; $x$ has code $1$ and the distinguished
outside term $y$ code $9$. The other eight terms are enumerated as a
nondecreasing multiset of codes $3,\ldots,80$, with repetitions allowed.

For each prefix, the search rejects a violated cap or an already present
witness. Both properties are inherited by extensions. It also considers
both generators of $L$ and each present term pairing nontrivially with
that generator, normalizes the chosen pair, and tests all 24 completions
to a symplectic basis. After removing one distinguished term of code $9$,
the smallest sorted remaining list is retained.
This prefix canonicalization is lossless: choose a lexicographically
smallest normalized representative of a hypothetical full counterexample.
A smaller normalized image of one of its prefixes would give a smaller
full representative under the same transformation, since sorting the
additional future terms can only decrease the initial sorted positions.
Thus that representative has no rejected prefix.

The standalone ancillary program
\texttt{verify\_line9\_primary.cpp} implements this search using subset
sums and matchings. It constructs all 40 lines and 130 planes, checks the
four isotropic and nine nondegenerate planes through $L$ and the 13
hyperplanes through $L$, and verifies the 51,840 symplectic maps used for
normalization. A separate standalone program,
\texttt{verify\_line9\_independent.cpp}, uses the coordinate pairings
$(0,3),(1,2)$, a different anchor, and a direct construction of only the
1,296 maps stabilizing $L$. It verifies form preservation on all $81^2$
vector pairs. Instead of matchings, it computes exact correction sets
by the first-element recursion
\[
 \Omega(B)=\bigcup_{i\in B}
 \left(\Omega(B\setminus\{i\})+
 \tfrac12\omega\left(v_i,\sum_{j\in B\setminus\{i\}}v_j\right)\right),
 \qquad \Omega(\varnothing)=\{0\},
\]
including the two possible appended generators. Its recursion is also
checked against all permutations in 768 specified subset-and-sign cases;
this is an implementation check, not a substitute for the exhaustive
search.

Both independent searches terminate completely, with the same accepted
prefix counts at lengths $1$ through $9$:
\begin{equation}\label{eq:lc-counts}
 1,\ 11,\ 117,\ 1388,\ 14537,\ 89660,\ 176940,\ 4643,\ 0.
\end{equation}
There is therefore no full counterexample. The primary completion status
is \file{EXHAUSTED_ALL_HAVE_SMALL_FULL_WITNESS}; the independent
status is \file{EXHAUSTED_NO_COUNTEREXAMPLE}. A timeout is a separate
non-successful status. The proof uses neither a quotient $2+2$ exclusion
nor any cap on hyperplanes not containing $L$.
\end{proof}

\subsection{The required active-line cap}

\begin{proposition}[Improved initial caps]\label{prop:lc-final}
Every hypothetical sequence $S$ under the standing assumptions satisfies
\[
\boxed{
 n_S(L)\leq2,\qquad
 n_S(U_{\mathrm{iso}})\leq5,\qquad
 n_S(U_{\mathrm{nd}})\leq6,\qquad
 n_S(H)\leq8.}
\]
\end{proposition}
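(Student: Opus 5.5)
Only the line cap is new: the other three caps are exactly Proposition~\ref{prop:lc-initial}. So we must exclude a line $L=\langle x\rangle$ with $n_S(L)=3$, which Proposition~\ref{prop:lc-initial} leaves open.

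The plan is to apply Lemma~\ref{lem:lc-quadratic} with $K=G$ and $A=N_L$. Since $L$ is isotropic, $N_L$ is elementary abelian, contains $G'$, and has dimension $b=r+2$; also $a=\dim G/N_L=3$. Thus $2(a+b)+1=2r+11=|S|$. If $n_S(L)=3$, then
\[
 |S\cap A|=(2r-1)+3=2r+2=2b-2,
\]
so the lemma applies. It says that no nonempty block $B$ of the nine outside terms, with $\sum B\in L$ in the active quotient, has correction set $\F$. The quotient-zero condition modulo $A$ is exactly that the active sum lies in $L$. We may use the Lie corrections of the active projections, because central components do not affect $\omega$. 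Hence the nine active projections outside $L$ form a sequence $R$ with no full block summing into $L$.

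Next we check that $R$ satisfies hypotheses (1)--(4) of Lemma~\ref{lem:lc-nine}.

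\emph{Hypothesis (1).} Every line other than $L$ carries at most $3$ terms by Proposition~\ref{prop:lc-initial}.

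\emph{Hypothesis (2).} An isotropic plane through $L$ carries at most $5$ terms in total, and $L$ already carries $3$. So at most $2$ terms lie outside $L$. A nondegenerate plane through $L$ carries at most $6$ terms, leaving at most $3$ outside $L$.

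\emph{Hypothesis (4).} A hyperplane through $L$ carries at most $8$ terms, leaving at most $5$ outside $L$.

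\emph{Hypothesis (3).} A plane not containing $L$ carries at most $5$ terms if it is isotropic, but the initial cap for a nondegenerate plane is $6$, so it must be sharpened to $5$. The plan is to use the join: a plane $U$ not containing $L$ spans the hyperplane $U+L$, which carries at most $8$ terms. Subtracting the $3$ terms on $L$ leaves at most $5$ terms in $U$.

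Lemma~\ref{lem:lc-nine} then produces a full block with sum in $L$, contradicting Lemma~\ref{lem:lc-quadratic}. Therefore $n_S(L)\le 2$.

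The main obstacle is hypothesis (3), namely the nondegenerate planes not containing $L$. The join argument above needs $U+L$ to be a hyperplane, which holds exactly because $L\not\subseteq U$. Since $U\cap L=0$, the occurrences in $U$ and in $L$ are disjoint, so subtracting the $3$ terms on $L$ from the hyperplane count is valid. If this step failed, I would instead apply Lemma~\ref{lem:lc-saturation} to $N_{U+L}$. The remaining checks are routine identifications of the parameters $a$ and $b$.
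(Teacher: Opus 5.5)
Your proposal is correct and follows the paper's proof essentially step for step. You apply Lemma~\ref{lem:lc-quadratic} to $A=N_L$ with $b=r+2$ and $a=3$, then verify the hypotheses of Lemma~\ref{lem:lc-nine} by subtracting the three line occurrences from the initial caps, using the hyperplane $U+L$ for planes $U$ not containing $L$, and derive the contradiction. The only cosmetic difference is hypothesis (1): you take it directly from the initial line cap, whereas the paper argues through the plane spanned by $L$ and the other line; both give the bound three.
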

\begin{proof}
Only the first bound remains. Suppose that a line $L$ contains three
active occurrences. The initial line bound shows that these are all of
its active occurrences. Its full inverse image $A=N_L$ is abelian of
dimension $b=r+2$, and $G/A$ has dimension $a=3$. Its intersection with
$S$ has length
\[
 |S\cap A|=(2r-1)+3=2r+2=2b-2.
\]
The nine remaining terms have active projections $R$ outside $L$.

Subtracting the three line occurrences from the initial caps gives two
outside terms in each isotropic plane through $L$, three in each
nondegenerate plane through $L$, and five in each hyperplane through $L$.
Any other line lies with $L$ in a plane and hence contains at most three
outside terms. If a plane $U$ does not contain $L$, its span $U+L$ is a
hyperplane, so $U$ contains at most five outside terms. These are exactly
the hypotheses of Lemma~\ref{lem:lc-nine}.

That lemma provides a nonempty outside block with active sum in $L$ and
full correction set. Its quotient sum in $G/A$ is zero, contradicting
Lemma~\ref{lem:lc-quadratic}. This argument is unaffected by the signs
of the three line projections or by any central coordinates of their
lifts. Thus both nonzero vectors on $L$ together have multiplicity at
most two, as required.
\end{proof}

\section{Isotropic plane bounds in the twelve-term layer}
\label{sec:isotropic-caps}

We record two finite geometric statements and their algebraic
consequences.  All multiplicities in this section count occurrences,
including occurrences with opposite signs on the same projective line.
Let $(P,\omega)$ be a nondegenerate four-dimensional symplectic vector
space over $\mathbb F_3$, and let $U\leq P$ be an isotropic plane.
For an indexed block $B$ of vectors $v_i\in P$, write
\[
 \sigma(B)=\sum_{i\in B}v_i,
 \qquad
 \Omega(B)=\left\{
   \frac12\sum_{a<b}\omega(v_{\pi(a)},v_{\pi(b)}):
   \pi\text{ is an ordering of }B
 \right\}\subseteq\mathbb F_3.
\]
Here $1/2=2$ in $\mathbb F_3$.  The empty block has
$\Omega(\varnothing)=\{0\}$.  A block is called \emph{full} if
$\Omega(B)=\mathbb F_3$; the quotient-zero condition in this section
is $\sigma(B)\in U$.  In particular, quotient-zero does not require
$\sigma(B)=0$ in $P$.

\begin{lemma}[Seven outside vectors]\label{lem:isotropic-seven}
Let $R=(v_1,\ldots,v_7)$ be a sequence in $P\setminus U$ satisfying
\[
\begin{array}{c|c}
\text{subspace}&\text{maximum number of occurrences of }R\\ \hline
\text{one-dimensional subspace}&2\\
\text{isotropic plane}&5\\
\text{nondegenerate plane}&6\\
\text{hyperplane containing }U&3.
\end{array}
\]
There is a nonempty block $B\subseteq[7]$ such that
$\sigma(B)\in U$ and $\Omega(B)=\mathbb F_3$.
\end{lemma}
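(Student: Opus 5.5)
The plan is to mirror the proof of Lemma~\ref{lem:lc-nine}: a short algebraic reformulation of the witness condition, followed by a normalized exhaustive search. First I will replace the condition ``$\sigma(B)\in U$ and $\Omega(B)=\F$'' by a purely active zero-sum condition. If $\sigma(B)=u\in U$, append $-u$, or the suitable one or two vectors of $U$ spanning it. Since $U$ is isotropic and $\omega(\sigma(B),\cdot)$ vanishes on the appended terms exactly when they lie in $\sigma(B)^\perp\supseteq U$, cyclic rotation shows that the correction set is unchanged.

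With this reformulation, the argument in the proof of Lemma~\ref{lem:lc-nine} applies directly. A zero-sum block with no vertex of degree one is full if and only if its nonorthogonality graph contains two disjoint edges. The witness condition therefore becomes: for some $u\in U$, the sequence $R\cup\{-u\}$ contains a zero-sum block whose graph has a matching of size two. Equivalently, for some $B\subseteq[7]$ with $\sigma(B)\in U$, the graph of $B$ together with a vertex $-\sigma(B)$ has such a matching. Lemma~\ref{lem:relative-order-types} gives the full list of nonfull shapes (edgeless, single edge, balanced triangle), and this is what the search will test.

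Next I will normalize. The stabilizer of an isotropic plane $U$ in $\mathrm{Sp}_4(\F)$ acts transitively on pairs consisting of $U$ and a complementary isotropic plane $U'$. Its Levi part is $\mathrm{GL}(U)$, acting on $U'\cong U^*$ by the contragredient. Its unipotent radical consists of the symmetric maps $U'\to U$.

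The cap on hyperplanes containing $U$ drives the search. These hyperplanes are exactly $w^\perp$ for $w\in U\setminus\{0\}$, giving four of them, and each contains at most three terms. The quotient map $P\to P/U\cong U^*$ sends each of the seven terms to a nonzero vector, since every term lies outside $U$. A hyperplane $w^\perp$ corresponds to a line in $P/U$, and each of the four lines of $P/U$ contains at most three of the seven images. This already forces the images to spread over at least three directions of $P/U$. I will use $\mathrm{GL}(U)$ to fix the pattern of quotient images up to symmetry, and then enumerate the $U$-components, of which there are $9$ choices per term. The remaining caps on lines, isotropic planes and nondegenerate planes are checked prefix by prefix, exactly as in Lemma~\ref{lem:lc-nine}. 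The witness test computes, for each subset $B$ with quotient sum zero, whether the graph of $B\cup\{-\sigma(B)\}$ has two disjoint edges, using the first-element recursion for $\Omega$ as a cross-check. A second, independently coded program with a different anchor and direct correction-set computation should confirm the accepted prefix counts down to zero.

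The main obstacle is that there is no conceptual proof here. The statement is a finite assertion and its proof is a computation, so the real work is making the canonicalization provably lossless, by the same lexicographic-minimal-representative argument used in Lemma~\ref{lem:lc-nine}, and ensuring that the search space is small. I would first try to exploit pigeonhole on the quotient: the seven images in $P/U$ must contain a pair, or a triple, summing to zero. Such a pair $\{v_i,v_j\}$ with $\omega(v_i,v_j)\neq0$ already gives a single-edge quotient-zero block. Combining two such disjoint blocks, or a single block with a nonorthogonal extra pair, should produce fullness by hand in most cases, leaving only the configurations that are orthogonal in a degenerate way for the computer to handle.
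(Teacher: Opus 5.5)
Your plan matches the paper's proof: a lossless prefix-canonicalized exhaustive search over the $72$ vectors outside $U$ under its order-$1296$ stabilizer, with hereditary cap pruning, an exact fullness test, and a second implementation as a cross-check. The paper computes $\Omega(B)$ directly by the last-term recursion rather than by your appended-vertex matching test, and reports accepted prefix counts $1,8,65,617,2721,189,0$.

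One small correction: only appending the single vector $-\sigma(B)$ preserves $\Omega(B)$. If you append two vectors of $U$, one of them can sit after an arbitrary prefix $p$ of $B$, which shifts the correction by $\omega(p,\cdot)$; this can make a single-edge block full.
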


\begin{lemma}[Eight outside vectors]\label{lem:isotropic-eight}
Let $R=(v_1,\ldots,v_8)$ be a sequence in $P\setminus U$ satisfying
\[
\begin{array}{c|c}
\text{subspace}&\text{maximum number of occurrences of }R\\ \hline
\text{one-dimensional subspace}&2\\
\text{isotropic plane}&4\\
\text{nondegenerate plane}&6\\
\text{hyperplane containing }U&4.
\end{array}
\]
For $B\subseteq[8]$, let
$r(B)=(1,\mathbf1_B)\in\mathbb F_3^9$, and put
\[
\begin{split}
 \mathcal F_U&=\{\varnothing\ne B\subseteq[8]:
       \sigma(B)\in U,\ \Omega(B)=\mathbb F_3\},\\
 \mathcal N_U&=\{\varnothing\ne B\subseteq[8]:
       \sigma(B)\in U,\ \Omega(B)=\{-1,1\}\},\\
 M_U&=\operatorname{span}_{\mathbb F_3}
                  \{r(B):B\in\mathcal F_U\},\qquad
 W_U=\sum_{B\in\mathcal N_U}(-1)^{|B|}r(B).
\end{split}
\]
Then $W_U\in M_U$.
\end{lemma}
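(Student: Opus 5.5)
The plan is to prove Lemma~\ref{lem:isotropic-eight} by an exhaustive finite verification, organised like the proof of Lemma~\ref{lem:lc-nine}, together with a short coverage argument. The membership $W_U\in M_U$ is a rank condition in $\F^9$. I would test it by Gaussian elimination over $\F$: compare the rank of the matrix with rows $r(B)$, $B\in\mathcal F_U$, with the rank after $W_U$ is adjoined. Equivalently, one checks that no linear functional $\lambda$ on $\F^9$ vanishes on $M_U$ while $\lambda(W_U)\ne0$.

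\emph{Normalisation.} Fix $U=\langle e_1,e_2\rangle$, the span of the $a$-coordinates in the encoding used in Lemma~\ref{lem:lc-nine}. The condition $\sigma(B)\in U$ then says that the two $b$-coordinates of $\sigma(B)$ vanish. The stabiliser of $U$ in $\operatorname{Sp}_4(\F)$ acts on the admissible configurations and preserves $\mathcal F_U$, $\mathcal N_U$, $M_U$, $W_U$ and all the caps in the table. This stabiliser is the parabolic subgroup $\operatorname{GL}_2(\F)\ltimes\operatorname{Sym}_2(\F)$, of order $48\cdot27=1296$. I would therefore enumerate nondecreasing code multisets of eight vectors outside $U$, with repetitions allowed. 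The search prunes a prefix only when it violates one of the caps: at most two occurrences on a line, four in an isotropic plane, six in a nondegenerate plane, and four in any of the four hyperplanes containing $U$. Since cap violation is inherited by extensions, this pruning is lossless.

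Unlike in Lemma~\ref{lem:lc-nine}, the target property is not monotone under extension, because $W_U$ changes when terms are added. So no witness pruning is possible, and every admissible leaf must be tested. To keep the search feasible, I would canonicalise prefixes under the 1296 maps, retaining only the lexicographically smallest sorted image. The justification is the same as before: the minimal full representative of any configuration has only canonical prefixes. Two further simplifications come from the hyperplane cap. At most four terms lie in each hyperplane containing $U$, so the quotient images in $P/U\cong\F^2$ are spread out. This lets one fix an anchor term with a normalised quotient image, which shortens the canonicalisation.

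\emph{Leaf test.} At each leaf I would compute $\Omega(B)$ exactly for all $255$ nonempty blocks with $\sigma(B)\in U$. One route is the first-element recursion displayed in the proof of Lemma~\ref{lem:lc-nine}. Another is Lemma~\ref{lem:relative-order-types}, which classifies nonfull blocks as those whose nonorthogonality graph is edgeless, a single edge, or a balanced triangle, each with isolated vertices. From these sets one forms $M_U$ and $W_U$ and runs the elimination. As a sanity check, the single-edge blocks in $\mathcal N_U$ can be compared against the unweighted cancellation \eqref{eq:relative-single-edge-removal}. A second, independently written program would use different coordinates and a directly constructed stabiliser, and should report identical leaf counts and no failure.

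The main obstacle is making this unpruned search both complete and tractable, and convincing the reader that canonicalisation loses nothing. If the raw leaf count is too large, I would first split by the quotient multiset in $\F^2$, which is bounded by the hyperplane caps, and treat each quotient pattern separately. I would also look for a quick sufficient criterion, such as $M_U=\F^9$ whenever $\mathcal F_U$ contains nine affinely spanning blocks, so that the elimination runs only on the rare low-rank leaves.
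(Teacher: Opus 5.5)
Your plan is correct and is essentially the paper's proof. Both are an exhaustive search over sorted multisets of outside vectors with hereditary cap pruning, lossless prefix canonicalization under the $1296$-element stabilizer of $U$ with one anchored occurrence, exact correction sets, and Gaussian elimination of $W_U$ against the full rows, carried out in two separate implementations. One correction: the hyperplane caps force quotient rank two, so every row $r(B)$ with $\sigma(B)\in U$ lies in the seven-dimensional kernel $K_8=\{(c,x):\sum_i x_i(v_i+U)=0\}$, and your shortcut $M_U=\mathbb F_3^9$ can never hold; the right shortcut is full-row rank seven, which the paper turns into a hereditary closure (full rows spanning $K_n$ and full-block sums in all eight nonzero classes of $P/U$), so pruning beyond the caps is in fact possible.
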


\subsection{Complete finite verification of the two lemmas}
\label{subsec:isotropic-enumeration}

We give the enumeration conventions and the proofs of the pruning
rules, so that the finite assertions are reproducible from the supplied
integer-arithmetic programs.  Neither lemma is inferred from random
sampling.

In the first implementation, use coordinates
$(a_1,a_2,b_1,b_2)$, encoded by
$a_1+3a_2+9b_1+27b_2$, and the pairing
\[
 \omega(v,w)=a_1b'_1-b_1a'_1+a_2b'_2-b_2a'_2.
\]
The plane $U$ consists of codes $0,\ldots,8$; there are $72$ outside
vectors.  The geometry contains $40$ projective points, $130$ planes,
and four hyperplanes through $U$.  The stabilizer of $U$ in
$\operatorname{Sp}_4(3)$ has $1296$ elements.  It acts transitively
on the outside vectors, with exactly $18$ maps taking any specified
outside vector to the anchor of code $9$.

The programs construct these maps from all $51840$ symplectic maps,
retain precisely those preserving $U$, and verify the stated
cardinalities.  Thus one outside occurrence can be fixed at the anchor.
The other occurrences are listed in nondecreasing order.  At every
prefix, all images obtained by mapping a present occurrence to the
anchor are considered; one anchor occurrence is removed, and the
remaining images are sorted.  A prefix is rejected if a smaller
normalized tail occurs.

This prefix normalization loses no counterexample.  Choose a
lexicographically least normalized representative of a putative full
counterexample.  If one of its sorted prefixes admitted a smaller
normalized image, apply the same map to the full sequence and remove
the same anchor occurrence.  Adding the remaining images before sorting
cannot increase the initial order statistics of the smaller prefix.
The resulting full normalized tail would be smaller, a contradiction.
Cap violations are hereditary under extension and can therefore also
be rejected at the prefix stage.

The first implementation computes every ordering-correction set exactly
by the recurrence
\begin{equation}\label{eq:outside-correction-recurrence}
 \Omega(B)=\bigcup_{j\in B}\left(
  \Omega(B\setminus\{j\})+
  \frac12\omega\bigl(\sigma(B\setminus\{j\}),v_j\bigr)
 \right).
\end{equation}
This follows by specifying the last term in an ordering of $B$.
It uses only subsets of the current prefix.  In particular, the test
is not restricted to blocks with a pair of disjoint nonorthogonal
edges.

For Lemma~\ref{lem:isotropic-seven}, the branch is closed as soon as
a nonempty full block has sum in $U$.  Such a block persists in every
extension.  The accepted prefix counts at lengths one through seven
are
\[
 1,\quad 8,\quad 65,\quad 617,\quad 2721,\quad 189,\quad 0.
\]
Thus no complete counterexample survives.  The supplied source and log
are \file{verify_plane7_primary.cpp} and
\file{verify_plane7_primary.log}.

For Lemma~\ref{lem:isotropic-eight}, each full quotient-zero row is
inserted into a row basis by exact Gaussian elimination over
$\mathbb F_3$.  The target includes \emph{every} quotient-zero block
with a two-element correction set, with sign $(-1)^{|B|}$.  In
particular, single-edge blocks are included in the computation.
At a complete leaf the target is reduced against the full-row basis;
all deficient-rank leaves are checked in this way.

One additional hereditary closure is used.  Suppose a prefix of length
$n$ has quotient rank two.  Its augmented quotient-zero kernel is
\[
 K_n=\left\{(c,x_1,\ldots,x_n):
           \sum_{i=1}^n x_i(v_i+U)=0\right\},
 \qquad \dim K_n=n-1.
\]
If its full quotient-zero rows span $K_n$, and its full blocks have
sums in all eight nonzero classes of $P/U$, then any further outside
vector $v$ extends one of these full blocks to a quotient-zero full
block.  Its row has a fresh occurrence coordinate equal to one, while
all old rows have zero there.  Consequently the full-row rank increases
by one and again reaches the dimension of the augmented kernel.
Fullness persists because placing $v$ last translates the entire old
correction set by a fixed scalar.  The original full blocks remain
available, so the same argument applies at every subsequent extension.
Such a prefix is therefore closed without visiting its descendants.
Every admissible length-eight sequence has quotient rank two: otherwise
all eight terms would lie in one hyperplane through $U$, contrary to its
bound four.  At length eight, full-row rank seven alone therefore proves
the asserted target membership.

The complete ascending enumeration has accepted prefix counts
\[
 1,\ 8,\ 73,\ 1028,\ 14059,\ 172604,\ 1505040,\ 3325333.
\]
It closes $37831$ prefixes of length six and $1287248$ prefixes of
length seven by the hereditary coverage rule.  All $3325333$ visited
complete leaves satisfy $W_U\in M_U$.  Of these, $320$ have full-row
rank below seven and pass explicit target reduction.  The source and
log are \file{verify_plane8_primary.cpp} and
\file{verify_plane8_primary.log}.  Both sources are standalone; their seven- and
eight-term searches are separate.

\subsection{A separately implemented exhaustive verification}
\label{subsec:isotropic-independent}

The standalone programs
\file{verify_plane7_independent.cpp} and
\file{verify_plane8_independent.cpp}
provide complete second enumerations.  They use coordinate pairing
$(0,3),(1,2)$, anchor $27$, descending normalized tails, and a direct
construction of the $U$-stabilizer from basis images.  They verify all
$1296$ distinct maps, pairing preservation, and the $18$ normalizers
per outside vector.  The eight-term program also uses the opposite
Gaussian pivot order.

These programs recognize correction sets by their nonorthogonality
graph rather than by \eqref{eq:outside-correction-recurrence}.
The classification used is exact: an empty graph gives $\{0\}$;
a single edge plus isolated vertices, or a balanced triangle plus
isolated vertices, gives $\{-1,1\}$; every other graph gives the full
field.  Indeed, two disjoint edges allow independent adjacent swaps,
whose correction changes generate all three values.  A graph without
such a matching is a star or a triangle, together with isolated
vertices.  A star with at least two edges gives all three values by
placing its leaves before or after the center.  Directly considering
the six orders of a triangle distinguishes the balanced and unbalanced
cases.  As an additional implementation check, the graph classifier is
compared with all permutations in $756$ generated subset cases; the
classification argument itself does not depend on that diagnostic check.

The second seven-term enumeration gives the same prefix counts and
ends with no surviving length-seven prefix.  The complete eight-term
results are compared below.  ``Coverage closures'' count entire
subtrees closed by the proved rank-and-coverage rule, rather than
visited complete leaves.
\begin{center}
\begin{tabular}{lrr}
\hline
 & Ascending implementation & Descending implementation\\
\hline
Visited leaves & $3325333$ & $2495319$\\
Deficient-rank leaves & $320$ & $320$\\
Coverage closures, length $6$ & $37831$ & $37831$\\
Coverage closures, length $7$ & $1287248$ & $1249091$\\
\hline
\end{tabular}
\end{center}
The descending prefix counts are
\[
 1,\ 8,\ 73,\ 1028,\ 14059,\ 172604,\ 1466883,\ 2495319.
\]
The late counts differ because different normalized prefix orders
close different hereditary subtrees.  Each search separately exhausts
its complete required search space; equality of late counts is not
needed. Both seven-term programs report complete exhaustion with a
full quotient-zero witness in every case; both eight-term programs
report complete exhaustion with the moment criterion satisfied in
every case. Their logs record these completion statuses explicitly.
Elapsed times are informational; exhaustive completion is established by
the stated searches and their termination statuses.

\subsection{Transfer to the group sequence}
\label{subsec:isotropic-transfer}

For clarity we state exactly the input bounds used in this transfer.
Let $E_2$ be the extraspecial group of exponent three and order $3^5$,
and let $G=E_2\times C_3^r$, with $r\geq1$.  Then
$P=G/Z(G)$ is the symplectic four-space above.  The image in $P$ of
a noncentral term will be called its active vector.

\begin{proposition}\label{prop:isotropic-cap-three}
Suppose a product-one-free sequence $S$ over $G$ has length $2r+11$
and exactly $2r-1$ central terms.  Assume that its twelve active vectors
satisfy the bounds
\begin{equation}\label{eq:isotropic-initial-caps}
 \text{line }\leq2,\qquad
 \text{isotropic plane }\leq5,\qquad
 \text{nondegenerate plane }\leq6,\qquad
 \text{hyperplane }\leq8.
\end{equation}
Then every isotropic plane contains at most three active occurrences.
\end{proposition}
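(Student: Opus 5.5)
Suppose an isotropic plane $U$ contains $k\ge4$ active occurrences; by \eqref{eq:isotropic-initial-caps} we have $k\in\{4,5\}$. The inverse image $A=N_U$ is abelian of dimension $b=r+3$: $U$ is isotropic, and $Z(G)$ has dimension $r+1$. The quotient $G/A$ has dimension $a=2$, and $2(a+b)+1=2r+11$. The intersection satisfies $|S\cap A|=(2r-1)+k$, which equals $2b-3$ when $k=4$ and $2b-2$ when $k=5$. The remaining outside terms have active vectors $R$ in $P\setminus U$, and there are $12-k$ of them. I would treat the two values of $k$ separately.

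\emph{Case $k=5$.} Here $|R|=7$ and $|S\cap A|=2b-2$, which is exactly the setting of Lemma~\ref{lem:lc-quadratic}. I would check the hypotheses of Lemma~\ref{lem:isotropic-seven}.
\begin{enumerate}
\item A line outside $U$ carries at most $2$ outside terms, by the line cap.
\item Every isotropic plane has at most $5$ and every nondegenerate plane at most $6$; these are inherited directly.
\item A hyperplane $H\supseteq U$ satisfies $n_S(H)\le8$, so it contains at most $8-5=3$ outside terms.
\end{enumerate}
Lemma~\ref{lem:isotropic-seven} then gives a nonempty outside block with $\sigma(B)\in U$, which means quotient sum zero in $G/A$, and with $\Omega(B)=\F$. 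This contradicts Lemma~\ref{lem:lc-quadratic}.

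\emph{Case $k=4$.} Here $|R|=8$, $d=3$ and $D=1$, and I would apply Theorem~\ref{thm:relative} with this $A$. Its outside index set has size $n=2a+d+1=8$. The moment rows for $D=1$ are exactly $(1,\mathbf1_B)$, matching $r(B)$ in Lemma~\ref{lem:isotropic-eight}. The families match as follows.
\begin{enumerate}
\item $\mathcal F$ consists of the quotient-zero full blocks, which is $\mathcal F_U$.
\item $\mathcal T$ consists of the balanced-triangle blocks. These are the blocks in $\mathcal N_U$ other than the single-edge blocks.
\end{enumerate}
Lemma~\ref{lem:isotropic-eight} places in $M_U$ the sum over all of $\mathcal N_U$, including single-edge blocks. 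By \eqref{eq:relative-single-edge-removal}, the signed sum over single-edge blocks $\mathcal E$ lies in $M_D$. Subtracting it gives $W_D\in M_D$, which contradicts Theorem~\ref{thm:relative}.

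For this I must verify the caps of Lemma~\ref{lem:isotropic-eight}.
\begin{enumerate}
\item Lines have at most $2$ terms, and nondegenerate planes at most $6$.
\item A hyperplane through $U$ has at most $8-4=4$ outside terms.
\item An isotropic plane needs at most $4$ outside terms. This is the step that needs care, since the initial cap allows $5$.
\end{enumerate}
I would run the argument in order of decreasing $k$. First apply the $k=5$ case to every isotropic plane, so that every isotropic plane has occupancy at most $4$. Then for any other isotropic plane $U'\ne U$, every outside occurrence of $R$ lying in $U'$ is an active occurrence in $U'$. So $U'$ has at most $4$ outside terms, as required.

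I expect the main obstacle to be this bookkeeping. I need to confirm that the sign conventions in $W_U$ (all of $\mathcal N_U$) and $W_D$ (only $\mathcal T$) agree once $\mathcal E$ is removed. I also need to confirm that the $\Omega$ computed from active vectors alone equals the group correction set: central coordinates do not affect $\omega$, so this holds. Finally, the quotient-zero notion $\sigma(B)\in U$ must agree with the splitting $x_i=v_i+w_i$ over $A$.
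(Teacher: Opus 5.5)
Your proof is correct and follows the paper's argument. You first exclude five occurrences via Lemma~\ref{lem:isotropic-seven}, then use the resulting isotropic cap of four to satisfy Lemma~\ref{lem:isotropic-eight}, and finally strip the single-edge rows before invoking Theorem~\ref{thm:relative} with $d=3$. The only differences are cosmetic: for $k=5$ you cite Lemma~\ref{lem:lc-quadratic} where the paper applies Theorem~\ref{thm:relative} with $d=2$, which is the same degree-zero mechanism, and you cite \eqref{eq:relative-single-edge-removal} directly where the paper re-derives it with $Q$ and $e$ on $P/U$.
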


\begin{proof}
First suppose that an isotropic plane $U$ contains five active
occurrences.  Its full inverse image $A\leq G$ is abelian, contains
$G'$, and satisfies
\[
 b=\dim_{\mathbb F_3}A=r+3,\qquad
 a=\dim_{\mathbb F_3}(G/A)=2.
\]
Here the group operation on $A$ agrees with addition in Lie
coordinates.  With $C=S\cap A$ and $R=S\setminus C$, we have
\[
 |C|=(2r-1)+5=2r+4=2b-2,\qquad |R|=7.
\]
The inherited line and plane bounds are those of
Lemma~\ref{lem:isotropic-seven}.  A hyperplane through $U$ already
contains the five inside active terms, so its outside occupancy is
at most $8-5=3$.  That lemma therefore supplies a full quotient-zero
outside block.  Apply Theorem~\ref{thm:relative} with defect $d=2$.
The moment degree is zero: every nonempty full block contributes the
row $(1)$, so the full-row space is all of $\mathbb F_3$.
The required exclusion of the triangle target from that space is
impossible.  Consequently the isotropic bound improves from five to
four.

Now suppose that $U$ contains four active occurrences.  For its same
abelian inverse image $A$, the counts become
\[
 |C|=(2r-1)+4=2r+3=2b-3,\qquad |R|=8.
\]
The outside sequence has line bound two, isotropic-plane bound four,
nondegenerate-plane bound six, and through-$U$ hyperplane bound
$8-4=4$.  Thus Lemma~\ref{lem:isotropic-eight} gives $W_U\in M_U$.

We relate the target in that lemma, which includes single-edge blocks,
to the triangle target of Theorem~\ref{thm:relative}.  Let $W_E$ and
$W_T$ be the signed augmented-row sums for single-edge and
balanced-triangle blocks, respectively, so $W_U=W_E+W_T$.
Choose quotient coordinates $\bar v_i=(\bar v_{i1},\bar v_{i2})$ on
$P/U$, and put
\[
 Q(X)=\prod_{j=1}^{2}\left(1-
             \left(\sum_i\bar v_{ij}X_i\right)^2\right),\qquad
 e(X)=\sum_{i<j:\,\omega(v_i,v_j)\ne0}X_iX_j.
\]
For $m(X)=1$ or $X_k$, the degree of $Qem$ is at most seven, below
the eight Boolean variables.  Its alternating Boolean sum is zero.
Combining these nine identities into an augmented-row identity, a
single-edge block contributes its signed row, a triangle contributes
zero because it has three edges, and a commuting block contributes
zero.  All other contributions lie in $M_U$.  Hence $W_E\in M_U$,
and therefore $W_T\in M_U$ as well.

Theorem~\ref{thm:relative}, now with defect $d=3$ and moment degree
one, excludes precisely this membership for a product-one-free
sequence.  This contradiction excludes four inside active terms and
proves the asserted bound three.
\end{proof}

The proposition concerns the layer with exactly $2r-1$ central terms.
In particular, its new conclusion does not by itself exclude thinner
central layers.  Its finite hypotheses and the transfer above make no
assumptions on the unspecified central coordinates of the original
terms.

\section{A potential theorem for twelve active terms}\label{sec:twelve}
Throughout this section $S=(q_1,\ldots,q_{12})$ is a sequence in
$P\setminus\{0\}$ satisfying the following bounds:
\begin{equation}\label{eq:fourcaps}
\begin{array}{c|cccc}
\text{subspace}&\text{line}&\text{isotropic plane}&
\text{nondegenerate plane}&\text{hyperplane}\\\hline
\text{occupancy}&\le2&\le3&\le6&\le8.
\end{array}
\end{equation}
These are exactly the bounds derived in the preceding sections for
a sequence in Theorem~\ref{thm:main}.

\subsection{The family of required triangle cores}
Let $\mathcal K(S)$ be the family of balanced triples of distinct
\emph{vector values} in $\supp(S)$ with the following property.
Writing $t=u+v+w$, include $\{u,v,w\}$ if
\begin{equation}\label{eq:completion}
t=0\quad\text{or, for }t\ne0,\quad
\operatorname{mult}_S(-t)\ge1\ \text{or}\
\operatorname{mult}_S(t)\ge2.
\end{equation}
Let $\mathcal B(S)$ be the graph whose vertices are the values in
$\supp(S)$ and whose edges are the pairs contained in at least three
members of $\mathcal K(S)$.

\begin{theorem}[Finite potential theorem]\label{thm:potential12}
Under~\eqref{eq:fourcaps}, the cochain $a_{uv}=\omega(u,v)$ on
$\mathcal B(S)$ is a gradient over $\F$. Consequently there are
symmetric weights $\rho_{uv}\in\F$ on the nonorthogonal value pairs
such that
\[
\rho_{uv}+\rho_{vw}+\rho_{wu}=1
\qquad(\{u,v,w\}\in\mathcal K(S)).
\]
\end{theorem}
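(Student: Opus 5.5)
The theorem has two parts: a finite gradient statement, and a deduction of triangle weights from it. I would handle the deduction first, since it is purely combinatorial. Then I would reduce the gradient statement to a finite check.

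\emph{From a gradient to weights.} Suppose $a_{uv}=\omega(u,v)$ (with the orientation $a_{vu}=-a_{uv}$) equals $\phi(v)-\phi(u)$ on every edge of $\mathcal B(S)$, for some potential $\phi$ on $\supp(S)$ with values in $\F$. For a balanced triple $(u,v,w)$ we have $a_{uv}+a_{vw}+a_{wu}=3c=0$, so the cochain is closed on every triangle of $\mathcal K(S)$. The weight problem is a linear system over $\F$. Its unknowns are $\rho_{uv}$ on the nonorthogonal pairs, and there is one equation $\rho_{uv}+\rho_{vw}+\rho_{wu}=1$ for each core. By linear duality it is solvable iff every $\F$-combination $\sum_{T}\lambda_T[T]$ of cores that vanishes edgewise has $\sum_T\lambda_T=0$.

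Such a vanishing combination can be viewed as a $2$-cycle in the complex of cores. Consider an edge lying in at most two cores. Either it has coefficient sum forced to cancel between exactly two triangles, or it lies in a single core, whose $\lambda_T$ must then be zero.

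The plan is to show that $\sum_T\lambda_T$ equals a pairing of the cycle with the oriented cochain $a$, divided by the common balanced value. To do this, write $1=a_{uv}/c_T$ on each core, where $c_T=\pm1$ is its balanced value. Then use the gradient on the branching edges, those in at least three cores, to telescope. For edges in exactly two cores, the two incident triangles must induce compatible orientations; this is a local check from the balance condition $\omega=c_T$ on each edge. The conclusion is that the obstruction pairing vanishes exactly when $a$ is a coboundary on $\mathcal B(S)$. This is the ``dual cycle argument'' of the abstract, and I would write it out as a short lemma on $\F$-valued $2$-chains.

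\emph{The gradient statement.} The cochain $a$ is a gradient on $\mathcal B(S)$ iff its sum around every cycle of $\mathcal B(S)$, with orientation, is zero. Equivalently, on each connected component, assigning $\phi$ along a spanning tree gives no conflict. I would first use the $\operatorname{Sp}_4(3)$-invariance of both the caps \eqref{eq:fourcaps} and the definition of $\mathcal K(S)$ to normalize. Then I would argue that a failure forces a small substructure: a closed walk in $\mathcal B(S)$ with nonzero $\omega$-sum, each edge carrying three completable balanced triangles.

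Each such edge requires at least three further support values. Their completions \eqref{eq:completion} also require either $-t$ or a doubled $t$ in $S$. So a minimal failing configuration uses a bounded number of occurrences. The aim is to classify the minimal such ``frames'' up to symplectic equivalence; the paper reports eleven of them. Every admissible $12$-term extension of each frame would then be enumerated under the caps, with $\mathcal K(S)$, $\mathcal B(S)$ and potential consistency checked by exact $\F$-linear algebra. I would use prefix canonicalization as in the proof of Lemma~\ref{lem:lc-nine}, together with a second independent implementation. Cap violations are hereditary, so they prune soundly.

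\emph{Main obstacle.} The delicate point is the coverage argument: proving that every configuration on which the gradient property could fail actually contains one of the finitely many frames. Membership in $\mathcal K(S)$ is not monotone in the naive sense, since adding terms creates new cores and new branching edges. So ``contains a frame'' must be defined so that any nongradient cycle produces a frame, independently of the remaining terms.

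I would attempt this by taking a shortest cycle with nonzero $\omega$-sum in $\mathcal B(S)$. Then I would bound its length using the line cap of two and the isotropic-plane cap of three, since short cycles of nonorthogonal values are forced into few planes. Finally, I would take the three witnessing cores on each edge as the frame. If this bound proves too weak, the fallback is the unrestricted exhaustive search over all capped $12$-term multisets modulo symplectic equivalence. I expect that search to be feasible with canonical augmentation, at the scale of roughly $10^7$ leaves.
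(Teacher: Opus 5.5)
Your reduction to the dual condition is correct: with $D$ the unsigned edge--core incidence matrix, weights exist if every $\lambda\in\ker_{\F}D$ satisfies $\sum_T\lambda_T=0$. The gap is in deriving that condition from the gradient; it fails as you sketch it.

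\textbf{Gradient to weights.} As you note yourself, $a_{uv}+a_{vw}+a_{wu}=3c_T=0$ in $\F$ on every core. So $a$ is an $\F$-cocycle on the core complex, and its pairing with the boundary of any $\F$-valued $2$-chain vanishes identically. Writing $1=a_{uv}/c_T$ on the three edges of a core only returns $3\lambda_T=0$. Dividing by $c_T=\pm1$ changes nothing. What is needed is a division by $3$, which forces a passage through $\mathbb Z$ and cannot come from a lemma about $\F$-valued $2$-chains.

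In Lemma~\ref{lem:branchpotential} the paper does this as follows.
\begin{enumerate}
\item Lift $\lambda_T$ to $\{-1,0,1\}$ and $a_e$ to $\{\pm1\}$, write $\delta a(T)=3t_T$ with $t_T=\pm1$, and set $z=\sum_Tt_T\lambda_T[T]$.
\item Then $\partial z=\operatorname{diag}(a_e)D\lambda$ holds exactly over $\mathbb Z$. This vector is divisible by $3$ because $\lambda\in\ker_{\F}D$.
\item On an edge lying in at most two cores, its entry is an integer in $[-2,2]$, hence zero. This is precisely where the threshold ``at least three'' in $\mathcal B$ is used.
\item So $\gamma=\partial z/3$ is an integral $1$-cycle supported on $\mathcal B$, with $\langle a,\gamma\rangle=\tfrac13\langle\delta a,z\rangle=\sum_T\lambda_T$.
\item Modulo $3$ the left side equals $\langle\delta p,\gamma\rangle=\langle p,\partial\gamma\rangle=0$.
\end{enumerate}
Your ``exactly when'' is also too strong. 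If every core has an edge lying in no other core, weights exist whatever $a$ does on $\mathcal B$.

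\textbf{Coverage.} In the gradient part, the obstacle you flag disappears once the frames are chosen differently. Note that $\mathcal K(S)$ and $\mathcal B(S)$ are monotone in $S$, since \eqref{eq:completion} asks only for the presence or multiplicity of certain values. Taking frames to be minimal \emph{failing} configurations makes their classification as hard as the theorem itself. The paper's eleven frames are not of that kind; they are minimal witnesses of a single branching edge.
\begin{enumerate}
\item Normalize $\omega(u,v)=1$, $u=e_1$, $v=f_1$. Every balanced third vertex is then $w_z=-u-v+z$ with $z\in\langle u,v\rangle^\perp$.
\item Take three such vertices with minimal companion multisets $\{-z\}$ or $\{z,z\}$ for $z\ne0$, and reduce modulo the $\operatorname{SL}_2(3)$ fixing $u$ and $v$.
\item Every $S$ with $\mathcal B(S)\ne\varnothing$ contains such a frame.
\item For each frame, all cap-respecting extensions to twelve terms are enumerated. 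The family $\mathcal K$, the graph $\mathcal B$ and the potential are recomputed at every leaf.
\end{enumerate}
No bound on a shortest nongradient cycle is needed.

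Your fallback, a full search over capped twelve-term multisets modulo the symplectic group, is valid in principle. It is much larger than you estimate, though. The line cap alone already allows roughly $2.4\cdot10^{14}$ such multisets. Even after dividing by $|\operatorname{Sp}_4(3)|=51840$, that leaves far more than $10^7$ orbits.
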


We first justify the exact completion condition. For a balanced
triple, $t$ is orthogonal to its three vertices. If they are linearly
dependent, they span a nondegenerate plane, so $t=0$. If they are
independent, $t\ne0$ and
\[
\langle u,v,w\rangle^\perp=\F t.
\]
The isolated terms of a block with this triangle core lie on that
line. Its occupancy is at most two, so the sum $-t$ is attainable
precisely by one occurrence of $-t$ or two of $t$. These terms are
different from all the core vertices. If $t=0$, the triangle itself
is already a zero-sum block.

It is important to fold identical values before forming
$\mathcal B(S)$. A balanced triple cannot contain the same value
twice. Any solution on value pairs lifts to occurrence pairs by
copying the corresponding weight. Conversely, an edge of
$\mathcal B(S)$ has at least three \emph{different third values};
this fact is used in the frame reduction.

\subsection{An integral-cycle criterion}
\begin{lemma}\label{lem:branchpotential}
Let $\mathcal K$ be any family of balanced triangles in an alternating
$\F$-space. Let $\mathcal B$ consist of the edges lying in at least
three members of $\mathcal K$. If $\omega(u,v)=p(v)-p(u)$ on
$\mathcal B$ for a potential $p$, there are edge weights summing to
one on every member of $\mathcal K$.
\end{lemma}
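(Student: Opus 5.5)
The plan is to prove this by linear duality over $\F$, using the potential to supply explicit weights on the branching edges $\mathcal B$. Only the non-branching edges, each lying in at most two members of $\mathcal K$, are then left to be filled in.

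\emph{Step 1: the weights on $\mathcal B$.} Put $\rho_{uv}=-p(u)p(v)$ for every $\{u,v\}\in\mathcal B$. This is symmetric. Take $T=\{u,v,w\}\in\mathcal K$ with cyclic orientation $\omega(u,v)=\omega(v,w)=\omega(w,u)=c\neq0$, and suppose all three edges of $T$ lie in $\mathcal B$. The gradient condition gives $p(v)=p(u)+c$ and $p(w)=p(u)+2c$. The potential values on $T$ are therefore the three distinct elements of $\F$. Their second elementary symmetric function is $0\cdot1+1\cdot2+2\cdot0=2$, so
\[
\rho_{uv}+\rho_{vw}+\rho_{wu}=-2=1 .
\]
The constraint is thus already met on triangles lying entirely in $\mathcal B$. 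The normalisation is not forced: $p$ may be shifted by a constant, and a symmetric term such as $\omega(u,v)\bigl(p(v)-p(u)\bigr)$ may be added. I keep both freedoms available for Step 3.

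\emph{Step 2: reduce to the non-branching edges.} Every other member $T$ of $\mathcal K$ has at least one edge outside $\mathcal B$. Such an edge lies in at most two members of $\mathcal K$. Form the auxiliary multigraph $\Gamma$: its vertices are the members of $\mathcal K$ not contained in $\mathcal B$, and each non-branching edge becomes an edge of $\Gamma$ joining the one or two triangles containing it (a half-edge if it lies in only one). What remains is to assign values to the edges of $\Gamma$ so that each vertex $T$ has sum
\[
\tau(T)=1-\sum_{e\in T\cap\mathcal B}\rho_e .
\]
This is a vertex-sum problem on a graph. On a connected component of $\Gamma$ it is solvable over $\F$ whenever the component contains a half-edge or an odd cycle. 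Such an edge or cycle absorbs any discrepancy: propagate values along a spanning tree, then correct the root with the half-edge or the odd cycle.

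\emph{Step 3: the main obstacle.} The remaining case is a bipartite component of $\Gamma$ without half-edges. There the solvability condition is the integral-cycle condition
\[
\sum_{T\in X}\tau(T)=\sum_{T\in Y}\tau(T),
\]
where $X$ and $Y$ are the two sides of the bipartition. Dually, this says that every $\F$-combination of triangles cancelling on all edges has total coefficient zero. I expect this step to be the real work.

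My plan is to write $\tau(T)$, using the potential and the orientation constant $c_T$, as a sum of edge contributions. Each contribution should depend only on the edge, together with an orientation sign that flips between the two triangles sharing a non-branching edge. Across a component these contributions would then telescope. Concretely, I would try extending $p$ arbitrarily to all vertices and setting $\rho_e=-p(u)p(v)+\kappa_e$ on every edge. I would then check that the defect of $T$ is a coboundary along $\Gamma$, using $c_T^2=1$ and the fact that $c_T$ is the same on all three edges of a balanced triangle. If this telescoping fails in general, I would use the leftover freedom from Step 1 (the constant shift of $p$ and the added $\omega(u,v)(p(v)-p(u))$ term) to force the bipartite balance, treating one component at a time.
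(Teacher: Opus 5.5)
\textbf{Gap: Step 3 is the whole content of the lemma, and it is only planned.} Your Steps 1 and 2 are sound. The weights $\rho_{uv}=-p(u)p(v)$ give sum $1$ on every member of $\mathcal K$ lying inside $\mathcal B$. The rank of the unsigned incidence matrix of $\Gamma$ over $\F$ then reduces everything to the balance $\sum_X\tau=\sum_Y\tau$ on bipartite components without half-edges.

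But Step 3 offers no argument for that balance. Put $\lambda_T=1$ on $X$, $\lambda_T=-1$ on $Y$, and let $D$ be the edge--triangle incidence matrix. Then $(D\lambda)_e=0$ on every non-branching edge, and the balance reads
\[
\sum_T\lambda_T=\sum_{e\in\mathcal B}\rho_e\,(D\lambda)_e .
\]
The right side lives entirely on branching edges, where $(D\lambda)_e$ is not controlled by anything local to $\Gamma$. So no telescoping along $\Gamma$ can settle it. Moreover, contributions that change sign between the two triangles on a shared edge add, rather than cancel, in $\sum_X-\sum_Y$. Falling back on the global freedoms of Step 1 is not an argument either, since there may be many components and they share branching edges.

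\textbf{The missing idea is an integrality argument.} Fix a vertex order, lift $\omega$ on edges to $a_e\in\{\pm1\}$, and lift $p$ to $\tilde p\in\{-1,0,1\}$. On a balanced triangle $\delta a(T)=3t_T$ with $t_T=\pm1$, and each edge of $T$ appears in $\partial T$ with sign $a_et_T$. Hence $z=\sum_T t_T\lambda_T[T]$ satisfies $\partial z=\operatorname{diag}(a_e)D\lambda$, which is supported on $\mathcal B$. Write $a_e=(\delta\tilde p)_e+3k_e$ on $\mathcal B$. Since $\langle\delta\tilde p,\partial z\rangle=\langle\tilde p,\partial\partial z\rangle=0$,
\[
3\sum_T\lambda_T=\langle\delta a,z\rangle=\langle a,\partial z\rangle=3\sum_{e\in\mathcal B}k_ea_e\,(D\lambda)_e .
\]
For this lift, $k_ea_e\in\{0,1\}$ equals $1$ exactly when $\{p(u),p(v)\}=\{1,2\}$; that is, $k_ea_e=-p(u)p(v)$. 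So your Step 1 weights do pass the balance test, and your route can be completed, but only through this cycle argument.

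That cycle argument is the paper's proof. The paper applies it directly to every $\lambda\in\ker_{\F}D$: lifted to $\{-1,0,1\}$, $D\lambda$ vanishes on edges lying in at most two triangles. This gives the integral cycle $\gamma=\partial z/3$ on $\mathcal B$ with $\langle a,\gamma\rangle=\sum_T\lambda_T$, and hence $\sum_T\lambda_T\equiv\langle p,\partial\gamma\rangle=0$. It needs neither explicit weights nor any analysis of $\Gamma$.
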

\begin{proof}
Let $D$ be the unsigned edge-triangle incidence matrix. The system
$D^T\rho=\mathbf1$ is soluble if and only if every
$\lambda\in\ker_{\F}D$ satisfies $\sum_T\lambda_T=0$.
Order the vertices and orient all edges and triangles accordingly.
Lift $\lambda_T$ to $\{-1,0,1\}$ and the nonzero edge pairing $a_e$
to $\{-1,1\}$. On a balanced triangle $T$ we have
$\delta a(T)=3t_T$ for $t_T\in\{-1,1\}$.
For the integral two-chain $z=\sum_T t_T\lambda_T[T]$, the exact
boundary identity is
\[
\partial z=\operatorname{diag}(a_e)D\lambda.
\]
This boundary is divisible by three. On an edge contained in at most
two triangles, the corresponding entry of $D\lambda$ is an integer
between $-2$ and $2$, and therefore is zero. Thus
$\gamma=\partial z/3$ is an integral one-cycle supported on
$\mathcal B$. Integrally,
\[
\langle a,\gamma\rangle
=\frac13\langle\delta a,z\rangle
=\sum_T\lambda_T.
\]
Modulo three the left side is
$\langle\delta p,\gamma\rangle=\langle p,\partial\gamma\rangle=0$.
This proves the dual solvability condition.
\end{proof}

\subsection{Complete generation of the minimal frames}
If $\mathcal B(S)$ is empty, the potential assertion is immediate.
Otherwise choose a branching edge $u,v$, orient it so that
$\omega(u,v)=1$, and apply a symplectic transformation taking
$u=e_1$, $v=f_1$. Put $c=-u-v$ and
$H=\langle u,v\rangle^\perp\cong\F^2$.
Every balanced third vertex has the unique form
\begin{equation}\label{eq:wz}
w_z=c+z,\qquad z\in H.
\end{equation}
Indeed, writing $w=au+bv+z$, the two balance equations give
$a=b=-1$. Choose three distinct third vertices, hence three distinct
parameters in $H$, with $z=0$ allowed.

For each nonzero $z$, condition~\eqref{eq:completion} requires one
of the multisets $\{-z\}$ and $\{z,z\}$. For $z=0$ the required
multiset is empty. Combine requirements by their coordinatewise
maximum multiplicities, not their sum: the same occurrence may serve
as a companion for more than one core. Discard any such companion
multiset that strictly contains another valid requirement.
Together with $u,v$ and the three $w_z$, the remaining minimal
requirements are the \emph{frames}.

Every actual $S$ containing the chosen three cores contains one of
these minimal frames. All bounds in~\eqref{eq:fourcaps} are inherited
by subsequences. The subgroup fixing $u$ and $v$ acts on $H$ as
$\operatorname{SL}_2(3)$, of order 24, and acts trivially on
$\langle u,v\rangle$. Canonicalizing the complete frame multiset
under these 24 maps gives a lossless reduction.

The first generator examines all $\binom93=84$ parameter triples and
the at most $2^3$ choices of companion requirements. There are 508
distinct minimal choices before occupancy tests, 248 passing those
tests, and eleven orbits under the 24 maps. A second generator
instead examines, for each parameter triple, all $6^4=1296$
companion multisets with occupancy at most two on the four lines
of $H$. A requirement is minimal precisely when deleting any one
occurrence destroys at least one core's completion condition.
This different generation method gives the same 508, 248 and eleven
results, and exactly the same canonical frame list.

The codes in Table~\ref{tab:frames} use
$a_1+3a_2+9b_1+27b_2$ for $(a_1,a_2,b_1,b_2)$.
Neither generator imposes a bound on the number of distinct values
or on repetition excess. Geometrically, three different doubled
companions would, together with the three $w_z$, place nine
occurrences in $\langle c,H\rangle$, contradicting the hyperplane
bound eight.

\begin{table}[htbp]
\centering\small
\begin{tabular}{rl}
\toprule
Frame & Codes\\\midrule
0 & $1,3,9,20,26,27,74$\\
1 & $1,3,9,26,27,30,74,80$\\
2 & $1,3,9,26,27,50,60,74$\\
3 & $1,3,3,9,23,27,30,74,80$\\
4 & $1,3,3,9,23,27,50,60,74$\\
5 & $1,3,3,9,23,27,53,57,74$\\
6 & $1,3,3,9,23,47,50,54,60$\\
7 & $1,3,3,9,23,27,27,30,47,80$\\
8 & $1,3,3,9,23,27,27,33,47,77$\\
9 & $1,3,3,9,23,27,27,47,50,60$\\
10 & $1,3,3,9,23,27,27,47,53,57$\\\bottomrule
\end{tabular}
\caption{The eleven minimal branching frames.}\label{tab:frames}
\end{table}

\subsection{Complete extension and potential verification}
For each fixed frame, append every sorted multiset of nonzero values
needed to reach length twelve, retaining precisely those prefixes
that satisfy~\eqref{eq:fourcaps}. The unrestricted implementation
does not identify symmetric extensions and does not impose any
additional pruning condition. This search covers every admissible
multiset extending the frame. The actual required core family and
the whole branching graph are recomputed at every leaf.

To test the potential condition, choose a root in each connected
component, set its potential to zero, and propagate
$p(v)=p(u)+\omega(u,v)$ along edges. If both endpoint values are
already assigned, check this equality. Equality at every edge is
equivalent to the existence of a potential. All operations are exact
in $\F$.

\begin{table}[htbp]
\centering
\begin{tabular}{lrr}
\toprule
Frame & With frame stabilizer & Unrestricted\\\midrule
0 & 7\,453\,266 & 7\,453\,266\\
1 & 995\,486 & 995\,486\\
2 & 342\,022 & 995\,918\\
3--6, each & 24\,308 & 24\,308\\
7--10, each & 536 & 536\\\midrule
Total & 8\,890\,150 & 9\,544\,046\\\bottomrule
\end{tabular}
\caption{Completed twelve-term extension counts. Every leaf passes
the potential test.}\label{tab:extensions}
\end{table}

The second implementation independently generates the geometry,
uses the pairing of coordinates $(0,3),(1,2)$, converts the displayed
frames by interchanging the last two coordinates, and appends values
in descending rather than ascending order. It applies no symmetry
reduction. The two implementations complete all eleven frames with
the counts in Table~\ref{tab:extensions}. Every leaf passes the
potential test. No small-support theorem or Gaussian-solvability
fallback is used to close a leaf.

For completeness, the stabilizer pruning in the first implementation
is lossless. In each stabilizer orbit choose the lexicographically
least sorted \emph{complete} extension. If a sorted prefix were
made smaller by a stabilizer map, the sorted first entries of the
image of the complete extension would be no larger than the sorted
image of that prefix, and hence would make the full extension
smaller. Thus every orbit has a representative whose prefixes survive.
Only frame 2 has a nontrivial stabilizer affecting the displayed
counts.

Any configuration with a branching edge contains a frame after the
normalization above and is therefore covered by one of these
complete searches. Configurations without a branching edge already
have the potential property. This proves the potential assertion of
Theorem~\ref{thm:potential12}; Lemma~\ref{lem:branchpotential}
proves its edge-weight conclusion.

\section{The affine moment identity and the group contradiction}
\label{sec:conclusion}
For the twelve active occurrences, let $\mathcal F$ be the full
active-zero blocks and $\mathcal T$ the active-zero blocks consisting
of a balanced triangle and isolated vertices. Put
\[
r_1(B)=(1,\mathbf1_B(1),\ldots,\mathbf1_B(12)),\qquad
M_1=\Span_{\F}\{r_1(B):B\in\mathcal F\},
\]
\[
W_1=\sum_{B\in\mathcal T}(-1)^{|B|}r_1(B).
\]
The weights supplied by Theorem~\ref{thm:potential12}, lifted from
values to occurrences, give
\[
E(X)=\sum_{i<j,\,\omega(q_i,q_j)\ne0}\rho_{q_iq_j}X_iX_j,
\qquad
Q(X)=\prod_{\alpha=1}^4
\left(1-\left(\sum_iq_{i\alpha}X_i\right)^2\right).
\]
On the Boolean cube, $Q$ indicates active sum zero. For each
$L\in\{1,X_1,\ldots,X_{12}\}$, the polynomial $QEL$ has degree
at most $8+2+1=11<12$, so its alternating Boolean sum vanishes.

An active-zero block cannot have exactly one nonorthogonal edge:
pairing its sum with either endpoint would give a nonzero value.
By the ordering-correction classification, the remaining nonfull
blocks are commuting blocks or members of $\mathcal T$.
On these two families $E$ has values zero and one, respectively.
The thirteen coordinate identities therefore give
\begin{equation}\label{eq:finalmoment}
W_1=-\sum_{B\in\mathcal F}(-1)^{|B|}E(\mathbf1_B)r_1(B)
\in M_1.
\end{equation}

\begin{proof}[Proof of Theorem~\ref{thm:main}]
Suppose that $S$ were a product-one-free sequence as in the theorem.
The preceding subspace reductions give~\eqref{eq:fourcaps} for its
twelve active projections. Apply the relative moment theorem
(Theorem~\ref{thm:relative}) with $A=Z(G)$,
$b=r+1$, $a=4$ and $d=3$. Indeed,
\[
|S\cap A|=2r-1=2b-3,\quad
|S\setminus A|=12=2a+3+1,\quad D=d-2=1.
\]
It requires $W_1\notin M_1$, contrary to~\eqref{eq:finalmoment}.
\end{proof}

\section{Scope, verification and provenance}\label{sec:scope}
The twelve-term result treats exactly the central occupancy
$2r-1$ at total length $2r+11$. It is uniform in the number of central
direct factors, since the finite geometry is always $P=\F^4$ and
the coefficient degree remains three. Occupancies smaller than
$2r-1$ lead to higher-degree moment problems and are not covered.
In particular, no exact value of $\dc(E_2\times C_3^r)$ for all $r$
is asserted here.

The finite geometric theorem and its proof do not require the
stronger quadratic moment assertion for nine vectors outside an
isotropic plane. They use the already established occupancy three
on isotropic planes directly. Thus the proof does not assume the
occupancy-two improvement that would follow from that stronger
assertion.

The supplied files contain only the finite checks used here, their
inputs, full run records, and instructions. Counts refer to the
enumeration protocol of the corresponding program; in the twelve-term
case a sequence may extend more than one minimal frame. Counts from
different frames or implementations are not counts of globally distinct
isomorphism classes. A timeout, a partial log, or a successful sample
is not treated as a complete proof. The final checks report successful
completion only after all their prescribed branches have been exhausted.

\paragraph{Reproduction.}
The directory \file{anc/} contains eight standalone C++17 programs,
two Python frame generators, their inputs and complete logs.
Run \verb|python3 run_all.py --seconds 600| from that directory
with Python 3 and a C++17 compiler available. The time allowance
can be increased on slower hardware; reaching a time limit is a
failure, never a completed verification. The decisive programs are
\file{verify_twelve_primary.cpp} and
\file{verify_twelve_independent.cpp}; the frame generators are
\file{make_branch_frames_primary.py} and
\file{make_branch_frames_independent.py}.
The remaining sources are named \file{verify_line9_primary.cpp},
\file{verify_line9_independent.cpp}, \file{verify_plane7_primary.cpp},
\file{verify_plane7_independent.cpp}, \file{verify_plane8_primary.cpp}
and \file{verify_plane8_independent.cpp}.
The supplied \file{README.md} gives the exact expected counts, and
\file{SHA256SUMS} records the shipped ancillary snapshot. All
mathematical tests use exact arithmetic; elapsed-time reporting is
not part of the mathematical verification.

\paragraph{Use of AI-assisted tools.}
OpenAI's ChatGPT was used for proof development, algebraic checking,
design and implementation of the finite searches, literature searches,
and preparation of the manuscript. The separate implementations were
developed within this AI-assisted workflow; their separation refers to
code and algorithms, not to independent human peer review.
The computer-assisted claims are specified as exact finite assertions,
with their reductions, source programs and completed execution records
provided for inspection and reproduction.


\end{document}